\newtheorem{theorem}{Theorem}
\newtheorem{lemma}[theorem]{Lemma}
\newtheorem{observation}[theorem]{Observation}
\newtheorem{conjecture}[theorem]{Conjecture}
\newtheorem{problem}[theorem]{Problem}
\newtheorem{claim}{Claim}
\theoremstyle{definition}                    
\theoremstyle{remark}
\title{The lengths for which bicrucial square-free permutations exist}
\author{Carla Groenland\thanks{Utrecht University, the Netherlands. Partially supported by the European Research Council Horizon 2020 project CRACKNP (grant agreement no. 853234). \newline E-mail: \texttt{c.e.groenland@uu.nl}}~ and Tom Johnston\thanks{Mathematical Institute, University of Oxford, Oxford, OX2 6GG, United Kingdom.\newline  E-mail: \texttt{thomas.johnston@maths.ox.ac.uk}.}}
\date{\today}
\newcommand{\oiso}{\sim}
\tikzstyle{vertex}=[circle, draw, fill=black, inner sep=0pt, minimum width=4pt]
\newcommand{\ceil}[1]{\left\lceil #1 \right\rceil}
\begin{document}
\maketitle

\begin{abstract}
    A square is a factor $S = (S_1; S_2)$ where $S_1$ and $S_2$ have the same pattern, and a permutation is said to be square-free if it contains no non-trivial squares. The permutation is further said to be bicrucial if every extension to the left or right contains a square. We completely classify for which $n$ there exists a bicrucial square-free permutation of length $n$.
\end{abstract}
\section{Introduction}
A word $W$ contains a \emph{square} if it has two equal consecutive factors, that is, if $W = U X X V$ for some non-empty word $X$. For example, the word 102021 contains the square 0202 while the word 01213 is square-free. Squares in words have been well studied since Thue \cite{thue1906uber} proved that there is an infinite square-free word over an alphabet of size 3, while it is easy to see that there are no square-free words of length at least 4 over an alphabet of size 2. Thue also showed that there exist infinitely many cube-free words over a 2-letter alphabet.

A word $W$ is \emph{left-crucial} (respectively \emph{right-crucial}) with respect to squares if it does not itself contain any square, but any word obtained by prepending (respectively appending) a single letter contains a square. We say $W$ is \emph{bicrucial} (also known as \emph{maximal} in the literature) if it is both left-crucial and right-crucial. Right-crucial words were characterised by Li \cite{li1976annihilators} in 1976, and from this characterisation it follows that, up to permuting the alphabet, the smallest bicrucial word on an alphabet of size $n$ is the $n$th \emph{Zimin word} (where Zimin words are defined recursively as $W_1 =  1$ and $W_n = W_{n-1}nW_{n-1}$). It is also known that any square-free word is a factor of a bicrucial square-free word \cite{bean1979avoidable}, and the result of Thue \cite{thue1906uber} shows there  are infinitely many square-free words. These two results combine to show that there are arbitrarily long bicrucial square-free words. 

In 2011, Avgustinovich, Kitaev, Pyatkin and Valyuzhenich~\cite{avgustinovich2011square} initiated the study of bicrucial permutations, showing that such permutations exist of all odd lengths $8k+1$, $8k+5$ and $8k+7$ for $k \geq 1$. 
Using the constraint solver Minion, bicrucial permutations of length $8k+3$ for $k=2,3$ and of even length $32$ were found by Gent, Kitaev, Konovalov, Linton and Nightingale~\cite{gent2015crucial}, and they conjectured that larger permutations should exist.

\begin{conjecture}[Conjecture 8 in \cite{gent2015crucial}]
There exist bicrucial permutations of length $8k + 3$ for all $k \geq 2$.
\end{conjecture}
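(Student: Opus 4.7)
The plan is to prove the conjecture by exhibiting, for every $k \geq 2$, an explicit bicrucial square-free permutation $\pi_k$ of length $8k+3$. The natural route is an inductive construction. Starting from the known examples at lengths $19$ and $27$ (the cases $k=2,3$) from \cite{gent2015crucial}, I would inspect them for a recognisable block structure, aiming to identify a fixed eight-element ``gadget'' together with a splicing location such that inserting this gadget into a bicrucial square-free permutation of length $8k+3$ yields a bicrucial square-free permutation of length $8k+11$. A short computer search, constrained by the shape of the known examples, can be used to propose a candidate gadget; the real work is then proving by induction that the candidate actually preserves all three required properties.

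Given such a construction, the proof of correctness separates for each $k$ into three verifications: \emph{(a)} $\pi_k$ contains no square, \emph{(b)} every right extension of $\pi_k$ contains a square, and \emph{(c)} every left extension of $\pi_k$ contains a square. Items \emph{(b)} and \emph{(c)} come down to $8k+4$ cases each, indexed by the rank of the appended/prepended letter. Since any newly created square must involve the new letter, and the new letter sits at the end of the word, the witness square should be expressible in terms of a bounded-size suffix (respectively prefix) of $\pi_k$. If the gadget is designed so that a bounded suffix of $\pi_k$ is independent of $k$, the same witness square that works in the base case $k=2$ should work uniformly, and \emph{(b)} and \emph{(c)} reduce to a finite case check.

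The principal obstacle is \emph{(a)}. A square $S_1 S_2$ in $\pi_k$ can have length up to roughly $4k+1$, so local arguments are insufficient. The strategy I would try is: show that any square in $\pi_k$ either lies entirely inside the embedded copy of $\pi_{k-1}$ (and is excluded by induction) or straddles the inserted gadget; in the latter case, I would argue that the presence of the gadget's distinguished values in one of the two halves $S_i$ forces specific values into the other half that either violate the permutation (distinctness) condition or collapse the square back into a shorter square of $\pi_{k-1}$. If this clean inductive step does not quite work, a fallback is to encode a square-free word on three letters à la Thue into a permutation by a carefully chosen rank assignment; the danger there is that permutation squares are much more permissive than word squares, so the encoding must be designed so that every potential permutation square of $\pi_k$ pulls back to a square (or an otherwise forbidden factor) in the underlying word.
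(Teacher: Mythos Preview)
Your proposal is a research plan rather than a proof: no gadget is exhibited, no inductive step is verified, and you explicitly hedge with a fallback in case the main idea fails. The central gap is item \emph{(a)}. You say you would argue that a square straddling the inserted block ``forces specific values into the other half that either violate distinctness or collapse the square back into $\pi_{k-1}$'', but nothing in the setup guarantees this: squares in permutations are order-isomorphism conditions, not equality conditions, so the values in $S_2$ are not determined by those in $S_1$ at all---only their relative order is. Without a concrete gadget and a concrete splicing location there is no way to assess whether such a collapse argument exists, and a generic eight-letter insertion into a square-free permutation will typically create long squares. Your treatment of \emph{(b)} and \emph{(c)} has the same defect: the claim that the witness square lives in a bounded suffix is exactly what needs to be arranged by the construction, and you have not constructed anything.

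The paper's route is quite different and avoids induction entirely. It builds $\sigma$ of length $8k+3$ directly as a fixed left-crucial prefix of length~$9$, a fixed right-crucial suffix of length~$11$, and a middle section obtained by the high-medium-low construction applied to an arbitrary square-free permutation of length $(n-21)/2$. The point of the high-medium-low middle is that every high entry exceeds every medium entry, which in turn exceeds every low entry; this rigid stratification means certain four-letter patterns such as $(2,1,0,3)$ never occur there, and the square-freeness of $\sigma$ reduces to a short finite case analysis on how a hypothetical square can overlap the prefix or suffix. Criticality at both ends is immediate because the prefix and suffix are themselves crucial. Small $k$ (where prefix and suffix could interact) are handled by computer. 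This yields a uniform, non-inductive proof whose only nontrivial step is that pattern analysis; your inductive-gadget plan, even if a suitable gadget exists, would require a much more delicate argument to control squares of unbounded length.
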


\begin{conjecture}[Conjecture 9 in \cite{gent2015crucial}]
There exist arbitrarily long bicrucial permutations of even length.
\end{conjecture}

We confirm both these conjectures and completely classify the $n$ for which there exist bicrucial permutations of length $n$.
\begin{theorem}
\label{thm:main}
A bicrucial permutation of length $n$ exists if and only if $n=9$, $n\geq 13$ is odd  or $n\geq 32$ is even and not $38$.
\end{theorem}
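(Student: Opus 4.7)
The plan is to split the theorem into the non-existence part, covering the lengths $n\le 8$, $n\in\{10,11,12\}$, even $n\in\{14,16,\dots,30\}$, and $n=38$, and the existence part, covering all the remaining lengths. Every non-existence statement concerns a finite set of small lengths, so the natural approach is exhaustive backtracking: build square-free permutations one element at a time from the left, storing only the order-isomorphism class of the partial permutation, and pruning as soon as the newest suffix forms a square with an earlier equal-length suffix. The full-length survivors are then tested for bicruciality. The same search simultaneously certifies the explicit small bicrucial permutations of lengths $9$, $19$, $27$ and $32$ from~\cite{avgustinovich2011square, gent2015crucial}, which will be used as base cases.

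For the existence part, the construction of~\cite{avgustinovich2011square} already handles all odd $n$ in $\{9\}\cup\{8k+1, 8k+5, 8k+7 : k \ge 1\}$, so the outstanding families are odd $n=8k+3$ for $k\ge 2$ and even $n\ge 32$ with $n\ne 38$. For each family I would prove an \emph{extension lemma}: given a bicrucial square-free permutation $\pi$ of length $n$ satisfying a mild structural invariant, produce one of length $n+c$ satisfying the same invariant, for some fixed constant $c$. Iterating from a few explicit base cases then settles all sufficiently large $n$ in the given residue class. In the even case, ensuring that $n=38$ is skipped can be arranged either by taking $c$ to be a small even number larger than $2$ with several base permutations, or by supplementing the base at length $32$ with an additional base at length $40$ so the induction never passes through $38$.

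The extension I anticipate is a local insertion: pick a position inside $\pi$ and splice in a short gadget block of new values, choosing both the gadget's internal pattern and its pattern against the immediate neighbours in $\pi$. Whether a factor is a square depends only on its pattern, so the potentially dangerous factors are those that straddle the seam, and for each length of straddling factor there is only a bounded number of patterns the gadget must avoid. Bicruciality is essentially automatic once the entries at the two ends of $\pi$ are undisturbed, since the cruciality conditions are local at the two ends of the permutation.

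The main technical obstacle is proving that the extension preserves square-freeness \emph{globally}. Naively there are $\Theta(n^2)$ pairs of factors that could participate in a square across the seam, and the induction requires collapsing this to a constant number of checks independent of $n$. The hope is that any square $UXXV$ containing part of the gadget must align its repeated half $X$ with the seam in one of finitely many ways (bounded by the length of the gadget), so that forbidding these configurations at the gadget-design stage suffices. Choosing the structural invariant strong enough to support the induction --- typically some control on the patterns of the few entries adjacent to the seam --- while still being preserved under the insertion is the subtle step, and is where I expect most of the effort to go. The anomalous non-existence at $n=38$ itself is not predicted by any obvious modular obstruction and must be established purely by the computer search, explaining why the classification takes the slightly awkward form stated in Theorem~\ref{thm:main}.
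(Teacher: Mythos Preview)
Your high-level decomposition --- computer search for the finitely many non-existence cases including $n=38$, citation of known small base cases, and constructions for the remaining large $n$ --- agrees with the paper's. The gap is in your mechanism for the large constructions. You propose an inductive extension lemma: splice a constant-length gadget into an existing bicrucial $\pi$ at a seam, maintaining only a local invariant on ``the few entries adjacent to the seam''. But your hope that the dangerous squares ``align with the seam in one of finitely many ways'' is not justified. A square $(S_1;S_2)$ touching the gadget may have only $S_2$ meeting it; then $S_1$ is a contiguous factor of the old $\pi$ of unbounded length, lying arbitrarily far from the seam, and whether $S_1\sim S_2$ depends on the pattern of that distant factor, which a purely local invariant cannot control. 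There are $\Theta(n)$ candidate half-lengths $\ell$ to rule out, and no fixed gadget design can handle them all without \emph{global} structural information about $\pi$. Your proposal acknowledges this is ``where most of the effort goes'' but offers no concrete idea for collapsing it.

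The paper does not induct on $n$ at all. Each permutation is built directly as a fixed left-crucial prefix, then a scalable middle, then a fixed right-crucial suffix, where the middle is the high--medium--low blow-up of an arbitrary square-free permutation $\pi$ of the appropriate length. The global separation --- every high entry exceeds every medium entry, which exceeds every low entry --- is precisely the missing ingredient: any square lying inside the middle projects (on alternate positions) to a square in $\pi$, hence cannot exist, and any square straddling a prefix/middle or middle/suffix boundary is killed by exhibiting one concrete $4$-pattern such as $(2,1,0,3)$ or $(0,3,2,1)$ that is forced at the boundary but can never occur inside a high--medium--low block. Scalability comes from the freedom to take $\pi$ of any length, not from iterating a local insertion. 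If you strengthened your invariant to something of this global force, your inductive step would essentially collapse into the paper's direct construction.
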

The proof of this result is split into several parts: In Section \ref{sec:large-n} we give a construction of bicrucial permutations of length $8k + 3$ for every $k \geq 3$, which completes the classification of the odd $n$, and a construction for when $n \geq 48$ is even. The remaining small (even) cases are handled using computer searches, and we give constructions for the remaining possible $n$ in Section \ref{sec:small-n}. To show that there are no bicrucial permutations of length 38 we use an exhaustive computer search that we detail in Section \ref{sec:no-38}. The code used to exhaustively search for such permutations can also be used to enumerate square-free, left-crucial and bicrucial permutations with only minor modifications, and we do this in Section \ref{sec:enumeration}.

\section{Preliminaries}
\label{sec:prelim}
Let $\sigma \in S_n$ be a permutation of $\{0, \dots, n -1\}$ of length $n$. As we construct our bicrucial permutations it will be convenient write $\sigma$ as a vector where the $i$th entry is $\sigma(i)$ and where we index our vectors starting from 0. That is \[\sigma = \left( \sigma(0) , \sigma(1), \dots, \sigma(n - 1) \right) = \left(\sigma_0, \sigma_1, \dots \sigma_{n-1}\right).\]

Given a vector $a = (a_1, \dots, a_{n})$ consisting of $n$ distinct real numbers, one can associate a unique permutation $\sigma \in S_n$ (written as a vector) by replacing the $i$th smallest entry of $a$ with $i$, and we call this permutation the \emph{pattern} of $a$. We say two vectors $a$ and $b$ are \emph{order-isomorphic} if they have the same pattern, and in this case we write $a \sim b$. For example, the vector $(5, 2, 4, 10)$ has the pattern $(2, 0, 1, 3)$.

A \emph{square} of length $2\ell$ (for $\ell\geq 2$) is a factor
\[
(S_1; S_2) = (\sigma_k,\dots,\sigma_{k+\ell-1}; \sigma_{k+\ell},\dots,\sigma_{k+2\ell-1})
\]
where $S_1 \oiso S_2$, and we say a permutation is \emph{square-free} if it contains no squares of length at least $4$. For each $i\in \{0,\dots,\ell-1\}$, the entry $\sigma_{k+i}$ has the same position in the pattern as the entry $\sigma_{k + \ell +i}$ and we will say that the entry $\sigma_{k+i}$ \emph{corresponds} to the entry $\sigma_{k + \ell +i}$.

A permutation $\pi$ of length $n + 1$ is a \emph{right-extension} of $\sigma$ if $(\pi_0, \dots, \pi_{n-1}) \oiso \sigma$, or equivalently, if $\pi$ can be formed by appending an entry $x \in \{0, \dots, n\}$ to $\sigma$ and replacing $\sigma_i$ by $\sigma_i + 1$ if $\sigma_i \geq x $. We say that the permutation $\sigma$ is \emph{right-crucial} if it is square-free, and every right-extension $\pi$ contains a square, and we define left-extensions and left-crucial permutations similarly. We will be interested in permutations which are simultaneously left-crucial and right-crucial which we call \emph{bicrucial} permutations.

\paragraph{Up-up-down-down condition}
For $i\in \{0,\dots,n-2\}$, we say a permutation $\sigma\in S_n$ goes \emph{up} from position $i$ to position $i+1$ if $\sigma_i<\sigma_{i+1}$ and goes \emph{down} otherwise. The permutation $\sigma$ satisfies the \emph{up-up-down-down condition} if for every $i\in\{0,\dots,n-4\}$, $\sigma$ goes up from position $i$ to position $i+1$ if and only if it goes down from $i+2$ to $i+3$. This forces the permutation to alternate between going up twice and going down twice. If a permutation $\sigma$ violates the up-up-down-down condition and goes up from position $i$ to $i+1$ and also up from position $i+2$ to $i+3$ say, then $(\sigma_{i}, \sigma_{i+1}; \sigma_{i+2}, \sigma_{i+3})$ is a square and the permutation is not square-free.  In fact, the up-up-down-down condition is equivalent to $\sigma$ not containing any squares of length $4$.

In any permutation which satisfies the up-up-down-down condition, the entries fall naturally into three categories. We say the entry $i$ is a \emph{high entry} if $\sigma$ goes up from $i-1$ to $i$ and down from $i$ to $i+1$, a \emph{low entry} if $\sigma$ goes down from $i-1$ to $i$ and up from $i$ to $i+1$ and a \emph{medium entry} otherwise. This definition extends to the initial and final entries in the obvious way.

\paragraph{High-medium-low construction}
We now give a construction due to  Avgustinovich, Kitaev, Pyatkin and Valyuzhenich \cite{avgustinovich2011square} which, starting from a square-free permutation $\pi$ of length $n$, constructs a square free permutation $\sigma$ of length $2n$.  Let $\sigma$ be any permutation such that $\sigma_i < \ceil{n/2}$ when $i \equiv 1 \bmod 4$, $\sigma_i = \pi_i + \ceil{n/2}$ when $i$ is even and $\sigma_i \geq n + \ceil{n/2}$ when $i \equiv 3 \bmod 4$.  It is not hard to see that this permutation is square-free. Indeed, the permutation satisfies the up-up-down-down condition, so there are no squares of length 4 and any square must be of length at least 8. Restricting a square in $\sigma$ to only the terms of even index (in $\sigma$) gives a square of length at least 4 in $\pi$. Since $\pi$ is square-free, $\sigma$ must also be square-free.

\medskip

This simple high-medium-low construction is very useful. By suitably choosing the pattern of the prefix and the suffix of our permutation, we can ensure the permutation has a square when extending to the left and to the right. The difficulty is then in joining the prefix and suffix with a long middle section while ensuring that the permutation is square-free. To do this we construct the middle section using the high-medium-low construction above with additional constraints on $\pi$.
The key to doing this is the observation that every high entry is higher than every medium entry, which are all in turn higher than every low entry, while this not going to be true in the chosen prefix/suffix.

To prove that our constructions give square-free permutations, we will use the following simple observation. 
\begin{observation}
Suppose $\sigma_i$ is contained in a square and the corresponding entry in the other half of the square is $\sigma_j$. Then $|i - j | \leq n /2$.
\end{observation}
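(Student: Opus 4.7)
The plan is to unpack the definitions: the statement follows immediately once one writes out what ``corresponding'' and ``factor'' mean. Write the square containing $\sigma_i$ as $(S_1; S_2) = (\sigma_k, \ldots, \sigma_{k+\ell-1}; \sigma_{k+\ell}, \ldots, \sigma_{k+2\ell-1})$, of total length $2\ell$. By the definition of correspondence given in the preliminaries, if $\sigma_i = \sigma_{k+t}$ lies in $S_1$, then the corresponding entry in $S_2$ is $\sigma_j = \sigma_{k+\ell+t}$, so that $|i-j| = \ell$. The symmetric case where $\sigma_i \in S_2$ and $\sigma_j \in S_1$ is identical after swapping roles.

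Next, I would use the fact that the square is a factor of $\sigma$. Since $\sigma$ has length $n$ and the square occupies $2\ell$ consecutive positions inside $\sigma$, we must have $2\ell \leq n$, equivalently $\ell \leq n/2$. Combined with $|i-j| = \ell$, this yields $|i-j| \leq n/2$, as claimed.

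I do not anticipate any real obstacle; the observation is essentially a bookkeeping device. Its value lies not in the depth of the argument but in how it will later be applied: when verifying that a candidate permutation is square-free (in particular, for the high-medium-low based constructions that follow), it allows one to restrict attention to pairs of positions at distance at most $n/2$, which will be crucial for controlling the interaction between the prefix, the middle, and the suffix in the later constructions.
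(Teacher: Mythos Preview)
Your argument is correct and is exactly the intended reasoning; the paper states this as a simple observation without giving a proof, and your unpacking of the definitions (corresponding entries are at distance $\ell$, and $2\ell \le n$ since the square is a factor) is precisely what makes it immediate.
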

In particular, entries in our short prefix can only correspond to entries at most a little over halfway through the permutation and, provided $n$ is large enough, cannot correspond to entries in the suffix. This means we only need to avoid our prefix corresponding to part of the high-medium-low construction. When $n$ is small, we will check the construction using a computer.

\section{Constructions for large $n$}
\label{sec:large-n}
We first show the existence of bicrucial permutations when $n$ is of the form $8k+3$ and $k \geq 3$. Since constructions are already known when $n$ is of the form $8k +1$, $8k+5$ or $8k+7$ ($k \geq 1$) \cite{avgustinovich2011square} and the number of bicrucial permutations is known when $n \leq 19$ (OEIS  A238935 \cite{oeis}), this completely classifies the odd $n$ for which bicrucial permutations exist. 

\begin{figure}
    \centering
    \begin{tikzpicture}[xscale=\textwidth/52cm, yscale=8cm/83cm]
    
\draw node[style=vertex](0) at (0,0) {};
\draw node[style=vertex](1) at (1,6) {};
\draw node[style=vertex](2) at (2,5) {};
\draw node[style=vertex](3) at (3,2) {};
\draw node[style=vertex](4) at (4,4) {};
\draw node[style=vertex](5) at (5,7) {};
\draw node[style=vertex](6) at (6,3) {};
\draw node[style=vertex](7) at (7,1) {};
\draw node[style=vertex](8) at (8,8) {};
\draw node[style=vertex](9) at (9,59) {};
\draw node[style=vertex](10) at (10,46) {};
\draw node[style=vertex](11) at (11,18) {};
\draw node[style=vertex](12) at (12,44) {};
\draw node[style=vertex](13) at (13,60) {};
\draw node[style=vertex](14) at (14,35) {};
\draw node[style=vertex](15) at (15,19) {};
\draw node[style=vertex](16) at (16,43) {};
\draw node[style=vertex](17) at (17,61) {};
\draw node[style=vertex](18) at (18,47) {};
\draw node[style=vertex](19) at (19,20) {};
\draw node[style=vertex](20) at (20,39) {};
\draw node[style=vertex](21) at (21,62) {};
\draw node[style=vertex](22) at (22,36) {};
\draw node[style=vertex](23) at (23,21) {};
\draw node[style=vertex](24) at (24,42) {};
\draw node[style=vertex](25) at (25,63) {};
\draw node[style=vertex](26) at (26,48) {};
\draw node[style=vertex](27) at (27,22) {};
\draw node[style=vertex](28) at (28,45) {};
\draw node[style=vertex](29) at (29,64) {};
\draw node[style=vertex](30) at (30,37) {};
\draw node[style=vertex](31) at (31,23) {};
\draw node[style=vertex](32) at (32,41) {};
\draw node[style=vertex](33) at (33,65) {};
\draw node[style=vertex](34) at (34,49) {};
\draw node[style=vertex](35) at (35,24) {};
\draw node[style=vertex](36) at (36,40) {};
\draw node[style=vertex](37) at (37,66) {};
\draw node[style=vertex](38) at (38,38) {};
\draw node[style=vertex](39) at (39,25) {};
\draw node[style=vertex](40) at (40,79) {};
\draw node[style=vertex](41) at (41,84) {};
\draw node[style=vertex](42) at (42,81) {};
\draw node[style=vertex](43) at (43,78) {};
\draw node[style=vertex](44) at (44,80) {};
\draw node[style=vertex](45) at (45,85) {};
\draw node[style=vertex](46) at (46,77) {};
\draw node[style=vertex](47) at (47,76) {};
\draw node[style=vertex](48) at (48,83) {};
\draw node[style=vertex](49) at (49,86) {};
\draw node[style=vertex](50) at (50,82) {};
\draw (0) -- (1) -- (2) -- (3) -- (4) -- (5) -- (6) -- (7) -- (8) -- (9) -- (10) -- (11) -- (12) -- (13) -- (14) -- (15) -- (16) -- (17) -- (18) -- (19) -- (20) -- (21) -- (22) -- (23) -- (24) -- (25) -- (26) -- (27) -- (28) -- (29) -- (30) -- (31) -- (32) -- (33) -- (34) -- (35) -- (36) -- (37) -- (38) -- (39) -- (40) -- (41) -- (42) -- (43) -- (44) -- (45) -- (46) -- (47) -- (48) -- (49) -- (50) ;

\draw[dashed] (-1,13) -- (51,13);
\draw[dashed] (-1, 30) -- (51,30);
\draw[dashed] (-1, 54) -- (51, 54);
\draw[dashed] (-1, 71) -- (51, 71);
\end{tikzpicture}
\caption{A bicrucial permutation of length 51 constructed as in the proof of Theorem \ref{thm:8k+3}. The dashed lines separate the different regions.}
\label{fig:bicrucial51}
\end{figure}
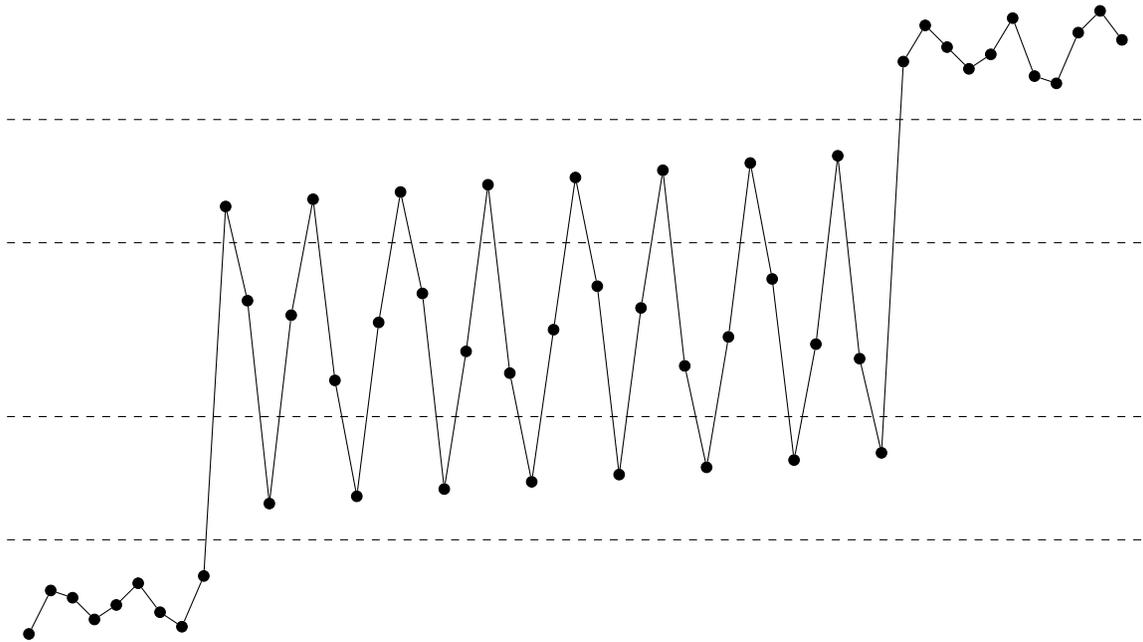

\begin{theorem}
\label{thm:8k+3}
Let $n = 8k + 3$ where $k \geq 3$. Then there exists a bicrucial permutation of length $n$.
\end{theorem}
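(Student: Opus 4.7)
The plan is to build $\sigma$ as the concatenation of three blocks: a short \emph{prefix} $P$ (roughly nine entries) whose small values force every left-extension of $\sigma$ to contain a square; a long \emph{middle} $M$ produced by the high-medium-low construction from a carefully chosen shorter square-free permutation $\pi$; and a short \emph{suffix} $S$ (roughly eleven entries) whose large values force every right-extension to contain a square. The value ranges of $P$, $M$, $S$ will be disjoint and ordered so that the entries of $P$ lie below every entry of $M$ and the entries of $S$ lie above; within $M$ the high, medium and low entries occupy three well-separated bands, in line with Figure~\ref{fig:bicrucial51}.

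The first step is to fix explicit patterns for $P$ and $S$---for instance the pattern $0,6,5,2,4,7,3,1,8$ at the start of Figure~\ref{fig:bicrucial51}, together with its right-crucial analogue at the end---which are square-free, satisfy the up-up-down-down condition at their outer ends, and are crucial on that outer side. The next step is to choose $\pi$ of length about $(n-20)/2$ which is square-free and whose endpoint patterns interlock with the last entries of $P$ and the first entries of $S$ so that the up-up-down-down condition extends across the two seams; for large $k$ one may take $\pi$ to be a fixed short prefix concatenated with a block extracted from a known infinite square-free sequence of permutations and a fixed short suffix, so that only finitely many seam patterns ever need to be inspected.

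To verify that $\sigma$ is square-free, I would invoke the Observation that corresponding entries of a square are within $n/2$ positions of each other. Since $|P|+|S|<n/2$ for $k\geq 3$, no square can reach from $P$ to $S$, and it suffices to rule out (i) squares entirely inside $M$, excluded because $\pi$ is square-free and the high-medium-low construction transfers this property; (ii) squares straddling $P$ and $M$, where the $P$-values lie below the low band of $M$, so any correspondence forces the matched $M$-entries to lie in low positions---which occur only at rigid residues modulo $4$---and the chosen shape of $P$ is designed not to order-iso to any such pattern; and (iii) squares straddling $M$ and $S$, handled symmetrically using the high band. Bicruciality then follows from the design of $P$ and $S$: any value prepended to $\sigma$ either violates the up-up-down-down pattern dictated by $P$ (creating a length-$4$ square) or completes a longer square against the first few entries of $M$, and the right side is analogous.

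The main obstacle I anticipate is the boundary analysis in (ii) and (iii): specifying $\pi$ whose end patterns simultaneously satisfy the up-up-down-down condition at both seams, respect the band separation of $M$, and block every would-be straddling correspondence, uniformly for all $k\geq 3$. I expect this to reduce to a small, $k$-independent compatibility check between the fixed ends of $\pi$ and the fixed blocks $P,S$, perhaps supplemented by a direct verification at $k=3$ where $M$ is shortest and the slack in the Observation is smallest.
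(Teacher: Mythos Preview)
Your plan is essentially the paper's own proof: the same prefix $(0,6,5,2,4,7,3,1,8)$, a high--medium--low middle built from a square-free $\pi$, and an eleven-entry suffix, arranged in five value bands, with square-freeness verified via the $n/2$ Observation and a short seam analysis. Two small corrections worth noting: the paper chooses $P$ and $S$ to be themselves left- and right-crucial as stand-alone permutations, so the square created by any one-point extension lies entirely inside the extended $P$ (or $S$) and never reaches $M$; and the analytic seam argument only works once $n>44$ (i.e.\ $k\ge 6$), with $k=3,4,5$ handled by computer, so plan for a few more base cases than your sketch suggests.
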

\begin{proof}
Throughout this proof we shall assume that $n > 44$. This is chosen so that $11 + n/2 < n -11$ and $n - 14 - n/2 > 8$ which will allow us to assume that the relevant corresponding entries come from the middle of the construction. When $n \leq 44$, the proof we give here does not apply. Instead, we have verified using a computer that there exists a bicrucial permutation by using the construction below.\footnote{We repeatedly apply the high-medium-low construction to construct a single permutation $\pi$, and then check that the construction below yields a bicrucial permutation for that particular choice of $\pi$.}.

We split the set $\{0, \dots, n -1\}$ into five different regions $R_0, \dots, R_4$ where $R_i = [r_i, r_{i+1})$ for a specific choice of the values $0 = r_0 \leq r_1 \leq \dotsb \leq r_4 \leq r_5 = n$. The values of the $r_i$ could be determined explicitly from the construction below if desired, but we will leave them only implicitly defined. We begin with a carefully chosen prefix in region $R_0$, followed by a middle section which uses the high-medium-low construction from Section \ref{sec:prelim} across the regions $R_1$, $R_2$ and $R_3$. Finally, we add a carefully chosen suffix in region $R_4$.

Let $m = (n -21)/2$ and let $\pi$ be any square-free permutation of length $m$ which begins down-down (i.e. with the pattern $(2, 1, 0)$). Since every square-free permutation follows the up-up-down-down pattern and trimming the ends of a square-free permutation leaves another square-free permutation, it is trivial to construct $\pi$. We now construct $\sigma$ as follows. Start with the permutation
\(\left( 0, 6, 5, 2, 4, 7, 3, 1, 8 \right),\)
and for $0 \leq i \leq 2m$, define $\sigma_{9+i}$ by
\[\sigma_{9+i} = \begin{cases}
    r_3 + i/4 & i \equiv 0 \mod 4,\\
    r_2 + \pi_{(i-1)/2} & i \equiv 1, 3 \mod 4,\\
    r_1 + (i-2)/4 & i \equiv 2 \mod 4.
\end{cases}\]
It is clear the permutation constructed so far follows the up-up-down-down pattern as required and, since $2m \equiv 2 \bmod 4$, the permutation currently ends down-down.
Finally, append the vector \[\left( r_4+3, r_4+8, r_4+5, r_4+2, r_4+4, r_4+9, r_4+1, r_4+0, r_4+7, r_4+10, r_4+6\right),\]
and note that $\sigma$ still follows the required up-up-down-down pattern.

By our choice of $\pi$, we have \[\left( \sigma_0, \dots, \sigma_{15}\right) \oiso \left(0, 6, 5, 2, 4, 7, 3, 1, 8, 14, 13, 9, 12, 15, 11, 10\right),\]
and this is left-crucial. Indeed, if this is extended to the left by prepending $x \geq 1$, the permutation begins down-up-down and contains a square of length 4. When prepending $0$, the permutation begins with the pattern 
\[\left(0, 1, 7, 6, 3, 5, 8, 4, 2, 9, 15, 14, 10, 13, 16, 12, 11\right),\]
which is a square as \[\left(0, 1, 7, 6, 3, 5, 8, 4\right) \sim \left(0, 1, 6, 5, 2, 4, 7, 3\right) \sim \left(2, 9, 15, 14, 10, 13, 16, 12 \right).\]
By construction, $\sigma$ ends with the pattern $\left( 3, 8, 5, 2, 4, 9, 1, 0, 7, 10, 6\right)$ which is right-crucial. When appending $x \leq 6$, there is a square of length $8$, and when appending $x \geq 7$ there is a square of length 4. Hence, any extension of $\sigma$ to the left or to the right contains a square, and it only remains to check $\sigma$ is square-free. 

Let $S = (S_1; S_2)$ be a square in $\sigma$ and note that $S_1$ has length at least 4. Since the middle section is constructed using the high-medium-low construction, it is square-free and it must be the case that $S_1$ contains one of the first 9 entries or that $S_2$ contains one of the last 11 entries.

Suppose first that $S_1$ contains some of the first 9 entries. If $S_1$ has length 4, then the square is contained entirely in the first 16 entries (the worst case being $\left(\sigma_{8}, \dots, \sigma_{11}; \sigma_{12}, \dots, \sigma_{15}\right)$). However, we know
\[\left( \sigma_0, \dots, \sigma_{15}\right) \oiso \left(0, 6, 5, 2, 4, 7, 3, 1, 8, 14, 13, 9, 12, 15, 11, 10\right),\] and this is square-free. Similarly, $S_1$ cannot be the first 8 entries. Hence, we can assume that $S_1$ contains $\sigma_{8}$ and has length at least 8.

If $S_1$ contains $(\sigma_5, \sigma_6, \sigma_7, \sigma_{8})$, then it contains the pattern $(2, 1, 0, 3)$. The corresponding entries are $(\sigma_{j +5}, \sigma_{j+6}, \sigma_{j+7}, \sigma_{j +8})$ for some $4 \leq j \leq n /2$. Since $n > 44$, we have $n/2 + 8 < n -11$ and the corresponding entries must come from the middle section, but the middle section follows the high-medium-low construction and does not contain any such pattern as the first, high entry is lower than the last, medium entry. If $S_1$ does not contain $(\sigma_5, \sigma_6, \sigma_7, \sigma_{8})$, then it must contain both $\sigma_{8}$ which is a medium entry and $\sigma_{11}$ which is a low entry, but $\sigma_{8} < \sigma_{11}$. The corresponding entries must again be in the middle section by our choice of $n$, but every low entry is lower than any medium entry by construction. This gives a contradiction and $S_1$ does not intersect the first 9 entries.

We now turn our attention to the case where $S_2$ intersects the last 11 entries and apply a very similar argument. Since $m \equiv 3 \bmod 4$ and $\pi$ is a square-free permutation of length $m$ which begins down-down, $\pi$ must end down-down. In particular, the pattern of the last 18 entries in our construction is entirely determined and $\sigma$ ends in the square-free pattern
\[\left(5, 4, 0, 3, 6, 2, 1, 10, 15, 12, 9, 11, 16, 8, 7, 14, 17, 13\right).\]
In particular, the last $11+7$ entries contain no square of length $4$.
This means we can assume that $S_2$ has length at least 8. It cannot be the case that $S_2$ contains the factor $\left(\sigma_{n - 14}, \dots, \sigma_{n-11}\right)$ as this has the pattern $(2, 1, 0, 3)$ and the corresponding entries must be in the middle section where this pattern never occurs. As $S_2$ intersects the last 11 entries, it must start at one of $\sigma_{n-13}, \dots, \sigma_{n-8}$.  Since the medium entry $\sigma_{n - 13}$ is lower than the low entry $\sigma_{n-8}$, $S_2$ cannot contain both $\sigma_{n-13}$ and $\sigma_{n-8}$, which rules out $\sigma_{n-13}$. If $S_2$ were to start at one of $\sigma_{n-12}, \dots, \sigma_{n-8}$, then it would contain the low entry $\sigma_{n-8}$ and the medium entry $\sigma_{n-5}$. However, the corresponding entries from $S_1$ would come from the middle section where all low entries are lower than all medium entries, and $S$ would not be square.
Hence, $S_2$ does not intersect with the last 11 entries and we are done.
\end{proof}

We now turn our attention to the even lengths for which we use a similar but more complicated construction. We will use different length suffixes depending on the parity of $n$ modulo 8. The suffixes are also more complicated and we will have to split into 7 regions to guarantee that the construction is square-free.

\begin{theorem}
\label{thm:even}
There exist bicrucial permutations for all even $n \geq 48$.
\end{theorem}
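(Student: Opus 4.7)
The plan is to mirror the construction in the proof of Theorem \ref{thm:8k+3}: we build $\sigma$ by concatenating a short left-crucial prefix $P$, a long middle obtained from the high-medium-low construction applied to some square-free permutation $\pi$, and a short right-crucial suffix $Q_r$ whose precise form depends on the residue $r = n \bmod 8$. As in the odd case we split $\{0,\dots,n-1\}$ into intervals that house the values of $P$, the low, medium and high layers of the middle, and $Q_r$; because each $Q_r$ has to contain values at several heights in order to be right-crucial and simultaneously avoid patterns that appear in the high-medium-low middle, seven intervals $R_0,\dots,R_6$ are needed instead of the five used before.

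I would keep the left-crucial prefix $P = (0,6,5,2,4,7,3,1,8)$ from the odd construction, so that the middle section must again begin down-down. For each even residue $r \in \{0,2,4,6\}$ I would exhibit, by a short finite search, a right-crucial suffix $Q_r$ of an appropriate length $|Q_r|$. The length is chosen so that $m := (n - |P| - |Q_r|)/2$ is an integer of the right congruence class modulo $4$ (so that the up-up-down-down rhythm closes correctly at both boundaries), and so that a square-free $\pi$ of length $m$ which starts and ends down-down exists. Since there are only four residues to handle, a finite list of suffixes suffices for every even $n\geq 48$.

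The square-freeness argument would then parallel the one in Theorem \ref{thm:8k+3}. Squares of length $4$ are excluded by the up-up-down-down condition; any square entirely inside the middle section is excluded because the middle is the high-medium-low image of the square-free $\pi$; and by the observation that corresponding entries of a square lie within $n/2$ positions of each other, any square meeting the prefix has its other half entirely inside the middle (this is where the threshold $n\geq 48$ enters), and similarly for each suffix. Such mixed squares are then ruled out by pointing to a local factor of $P$ or $Q_r$ whose pattern cannot occur in the high-medium-low middle, for instance a low entry lying above a medium entry, or a factor of pattern $(2,1,0,3)$. The finitely many small $n$ per residue class that are not covered by the asymptotic argument would be handled by a direct computer check, exactly as is done for $n\leq 44$ in the proof of Theorem \ref{thm:8k+3}.

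The main obstacle I expect is the engineering of the four suffixes $Q_r$. Each must be right-crucial, it must glue onto the middle with the correct up-down parity for its residue class, and the values it uses in the outer regions $R_0,R_1,R_5,R_6$ must be placed so that no factor of the high-medium-low middle can mimic them. Designing a single suffix with all three of these properties was already delicate in the odd case; producing four suffixes (with four different lengths and four different gluing conditions) that together cover all even residues modulo $8$, while keeping the pattern-avoidance check clean, is where the bulk of the casework in the proof is likely to live.
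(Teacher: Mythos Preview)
Your high-level plan is exactly the paper's: a left-crucial prefix, a high--medium--low middle built from a square-free $\pi$, one of four suffixes to cover the even residues modulo $8$, seven value regions, and a computer check at the bottom of the range. A few of your concrete choices, however, do not match what the paper actually needs.

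First, the paper does \emph{not} reuse the length-$9$ prefix $(0,6,5,2,4,7,3,1,8)$; it uses a purpose-built left-crucial prefix of length $32$. With a prefix that long (and suffixes of length up to $13$), the ``corresponding entries land in the middle'' argument only bites once $n>94$, not at $n\ge 48$. The range $48\le n\le 94$ is handled entirely by computer verification of the construction, so the threshold $48$ in the theorem statement is not where your distance observation enters---it is simply where the construction first succeeds.

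Second, the paper cannot take $\pi$ to be an arbitrary square-free permutation with the right up--down parity. It requires $\pi_{k-1}=k-1$ and $\max\{\pi_0,\pi_1,\pi_2\}=\pi_1$, and proves a separate claim that such $\pi$ exist for every $k\equiv 1\pmod 4$. This constraint is precisely what makes the glue at the right boundary work: two of the four suffix patterns (lengths $11$ and $13$) are \emph{not} right-crucial on their own, only when read together with the last seven entries of the middle, and those seven entries have the required shape only because of the imposed condition on $\pi$. So the ``engineering of the four suffixes'' that you correctly flag as the crux is coupled to a non-trivial constraint on the middle section that your outline omits.
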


\begin{figure}
\centering
\begin{tikzpicture}[xscale=\textwidth/71cm, yscale=8cm/124 cm]
\draw node[style=vertex](0) at (0,8) {};
\draw node[style=vertex](1) at (1,2) {};
\draw node[style=vertex](2) at (2,0) {};
\draw node[style=vertex](3) at (3,20) {};
\draw node[style=vertex](4) at (4,29) {};
\draw node[style=vertex](5) at (5,19) {};
\draw node[style=vertex](6) at (6,3) {};
\draw node[style=vertex](7) at (7,4) {};
\draw node[style=vertex](8) at (8,7) {};
\draw node[style=vertex](9) at (9,5) {};
\draw node[style=vertex](10) at (10,1) {};
\draw node[style=vertex](11) at (11,30) {};
\draw node[style=vertex](12) at (12,31) {};
\draw node[style=vertex](13) at (13,21) {};
\draw node[style=vertex](14) at (14,6) {};
\draw node[style=vertex](15) at (15,9) {};
\draw node[style=vertex](16) at (16,22) {};
\draw node[style=vertex](17) at (17,12) {};
\draw node[style=vertex](18) at (18,10) {};
\draw node[style=vertex](19) at (19,24) {};
\draw node[style=vertex](20) at (20,26) {};
\draw node[style=vertex](21) at (21,23) {};
\draw node[style=vertex](22) at (22,13) {};
\draw node[style=vertex](23) at (23,14) {};
\draw node[style=vertex](24) at (24,17) {};
\draw node[style=vertex](25) at (25,15) {};
\draw node[style=vertex](26) at (26,11) {};
\draw node[style=vertex](27) at (27,27) {};
\draw node[style=vertex](28) at (28,28) {};
\draw node[style=vertex](29) at (29,25) {};
\draw node[style=vertex](30) at (30,16) {};
\draw node[style=vertex](31) at (31,18) {};
\draw node[style=vertex](32) at (32,102) {};
\draw node[style=vertex](33) at (33,74) {};
\draw node[style=vertex](34) at (34,41) {};
\draw node[style=vertex](35) at (35,79) {};
\draw node[style=vertex](36) at (36,103) {};
\draw node[style=vertex](37) at (37,77) {};
\draw node[style=vertex](38) at (38,42) {};
\draw node[style=vertex](39) at (39,70) {};
\draw node[style=vertex](40) at (40,104) {};
\draw node[style=vertex](41) at (41,76) {};
\draw node[style=vertex](42) at (42,43) {};
\draw node[style=vertex](43) at (43,80) {};
\draw node[style=vertex](44) at (44,105) {};
\draw node[style=vertex](45) at (45,73) {};
\draw node[style=vertex](46) at (46,44) {};
\draw node[style=vertex](47) at (47,71) {};
\draw node[style=vertex](48) at (48,106) {};
\draw node[style=vertex](49) at (49,75) {};
\draw node[style=vertex](50) at (50,45) {};
\draw node[style=vertex](51) at (51,81) {};
\draw node[style=vertex](52) at (52,107) {};
\draw node[style=vertex](53) at (53,78) {};
\draw node[style=vertex](54) at (54,46) {};
\draw node[style=vertex](55) at (55,72) {};
\draw node[style=vertex](56) at (56,108) {};
\draw node[style=vertex](57) at (57,92) {};
\draw node[style=vertex](58) at (58,91) {};
\draw node[style=vertex](59) at (59,118) {};
\draw node[style=vertex](60) at (60,119) {};
\draw node[style=vertex](61) at (61,60) {};
\draw node[style=vertex](62) at (62,57) {};
\draw node[style=vertex](63) at (63,58) {};
\draw node[style=vertex](64) at (64,121) {};
\draw node[style=vertex](65) at (65,120) {};
\draw node[style=vertex](66) at (66,59) {};
\draw node[style=vertex](67) at (67,122) {};
\draw node[style=vertex](68) at (68,123) {};
\draw node[style=vertex](69) at (69,56) {};
\draw (0) -- (1) -- (2) -- (3) -- (4) -- (5) -- (6) -- (7) -- (8) -- (9) -- (10) -- (11) -- (12) -- (13) -- (14) -- (15) -- (16) -- (17) -- (18) -- (19) -- (20) -- (21) -- (22) -- (23) -- (24) -- (25) -- (26) -- (27) -- (28) -- (29) -- (30) -- (31) -- (32) -- (33) -- (34) -- (35) -- (36) -- (37) -- (38) -- (39) -- (40) -- (41) -- (42) -- (43) -- (44) -- (45) -- (46) -- (47) -- (48) -- (49) -- (50) -- (51) -- (52) -- (53) -- (54) -- (55) -- (56) -- (57) -- (58) -- (59) -- (60) -- (61) -- (62) -- (63) -- (64) -- (65) -- (66) -- (67) -- (68) -- (69) ;

\draw[dashed] (-1, 36) -- (70, 36);
\draw[dashed] (-1, 51) -- (70, 51);
\draw[dashed] (-1, 65) -- (70, 65);
\draw[dashed] (-1, 86) -- (70, 86);
\draw[dashed] (-1, 97) -- (70, 97);
\draw[dashed] (-1, 113) -- (70, 113);
\end{tikzpicture}
\caption{A bicrucial permutation of length 72 constructed as in the proof of Theorem \ref{thm:even}. The dashed lines divide $\{0, \dots, 71\}$ into the 7 different regions used in the construction.}
\label{fig:72}
\end{figure}
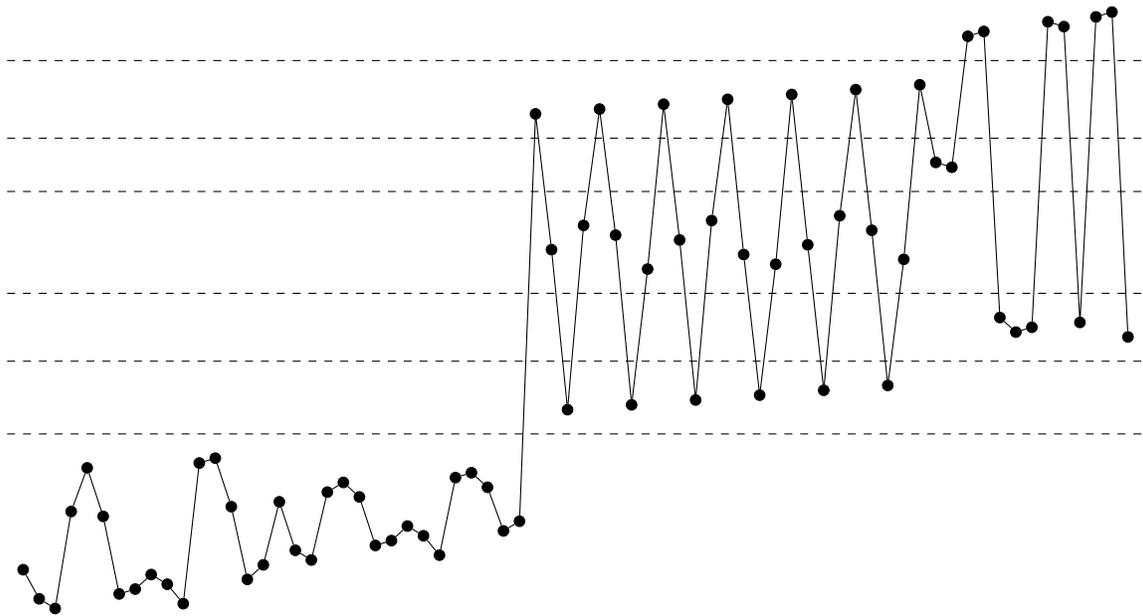

\begin{proof}
We will assume that $n > 94$. When $n \leq 94$, this proof does not apply, and instead we (again) use a computer to check the construction. Throughout this proof we will claim that certain small permutations are left-/right-crucial and we defer the proofs to Appendix \ref{sec:small-ends}.

We split the space $\{0, \dots, n - 1\}$ into 7 different regions $R_0, \dots, R_6$ where $R_i = [r_i, r_{i+1})$ for some specific choice of the values $0 = r_0 \leq r_1 \leq \dotsb \leq r_6 \leq r_7 = n$ determined implicitly by the construction. 
Let $m =  32 + 2k - 1$ where $k \geq 5$ is equal to $1 \bmod 4$. We will prove that there exist bicrucial permutations of lengths $m + 7, \dots, m + 13$ by extending the same left-crucial permutation $\tau$ of length $m$. Since the permutations all begin and end with fixed patterns which we have chosen to be left-crucial and right-crucial respectively, the only difficulty will be ensuring that the permutations are square-free. First, we need to find an arbitrarily long square-free permutation that satisfies certain conditions on the endpoints. 

\begin{claim}
\label{claim:perm}
There exists a square-free permutation $\pi$ of length $k$ such that $\pi_{k-1} = k-1$ and $\max\{\pi_0, \pi_1, \pi_2\} = \pi_1$.
\end{claim}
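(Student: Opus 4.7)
The plan is to build $\pi$ from a shorter square-free permutation by combining the high-medium-low (HML) construction from Section~\ref{sec:prelim} with a value-complement and one appended entry. I would set $n = (k-1)/2$; since $k \geq 5$ and $k \equiv 1 \pmod 4$, $n$ is a positive even integer. Taking any square-free permutation $\rho$ of length $n$ (which exists as a factor of any sufficiently long square-free permutation), the HML construction yields a square-free $\sigma$ of length $2n = k-1$ whose direction pattern is down-up-up-down-down-up-up-$\dots$, with low values at positions $\equiv 1 \pmod 4$, high values at positions $\equiv 3 \pmod 4$, and medium values at the even positions. The value-complement $\sigma^c_i = (k-2) - \sigma_i$ is still square-free, flips the direction pattern to up-down-down-up-up-down-down-$\dots$ (so that $\sigma^c_1$ is a local maximum), and simply swaps the roles of the low and high value ranges while keeping the three ranges pairwise disjoint.

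I would then define $\pi$ of length $k$ by $\pi_i = \sigma^c_i$ for $0 \leq i \leq k-2$ and $\pi_{k-1} = k-1$. This is a permutation of $\{0,\dots,k-1\}$, and by construction it satisfies $\pi_{k-1} = k-1$ and $\max\{\pi_0,\pi_1,\pi_2\} = \sigma^c_1 = \pi_1$. Since $k-2 \equiv 3 \pmod 4$ is an ``up'' step in the target pattern and $\pi_{k-1}=k-1$ exceeds $\pi_{k-2}$, appending $k-1$ preserves the up-up-down-down condition, so $\pi$ contains no squares of length~$4$.

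Any square of $\pi$ contained entirely within the first $k-1$ entries would be a square of $\sigma^c$ (the two sequences are identical as vectors), so no such square exists. It remains to rule out squares $(S_1;S_2)$ of length $2\ell \geq 6$ whose second half $S_2$ ends at position $k-1$. The last entry of $S_2$ is $\pi_{k-1}=k-1$, which is the global maximum and hence the maximum of $S_2$; for $S_1 \sim S_2$, the last entry $\pi_{k-\ell-1}$ of $S_1$ would have to be the maximum of $S_1$. I would then split on $\ell \bmod 4$. If $\ell \not\equiv 0 \pmod 4$, the period-$4$ direction pattern of $\pi$ gives different directions within $S_1$ (starting at position $k-2\ell$) and within $S_2$ (starting at position $k-\ell$), so $S_1\not\sim S_2$. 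If $\ell \equiv 0 \pmod 4$ (so $\ell\geq 4$), then $k-\ell-1$ is even and $\pi_{k-\ell-1}$ is a medium value of $\sigma^c$; meanwhile the starting index $k-2\ell \equiv 1 \pmod 4$ is a high position, so $S_1$ already contains a high value of $\sigma^c$, which strictly exceeds every medium value. Hence $\pi_{k-\ell-1}$ is not the maximum of $S_1$, a contradiction.

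The hard part will be this last square-freeness check: combining the strict separation between the high, medium, and low value ranges built into HML (as transported through the complement) with the period-$4$ rigidity of the up-down pattern. These two structural facts together rule out the only squares that could possibly be created by appending the global maximum.
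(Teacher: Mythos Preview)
Your proposal is correct and takes essentially the same approach as the paper: build the first $k-1$ entries via a medium--high--medium--low variant of the HML construction (the paper specifies this variant directly; you reach it by complementing the standard HML output) and then append $k-1$. The square-freeness check differs only cosmetically---the paper notes that the final four entries of $\pi$ have pattern $(2,1,0,3)$, which never recurs in an HML block, whereas you argue that the last entry of $S_1$ cannot be its maximum because $S_1$ begins at a high position and ends at a medium one.
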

\begin{proof}
Let $\pi'$ be a square-free permutation of length $(k-1)/2$. Form the first $k-1$ entries of $\pi$ by using a variation on the high-medium-low construction which starts with a `medium' entry before going to a `high' entry, and then append $k -1$. This is clearly a permutation where $\max\{\pi_0, \pi_1, \pi_2\} = \pi_1$ and $\pi_{k-1} = k-1$, so it remains to check that it is square-free. By construction any possible square must end at $\pi_{k-1}$ and, since $\pi$ follows the up-up-down-down construction, any square must be of length at least $8$. In particular, the square must end in the pattern $(2, 1, 0, 3)$ which is not found anywhere else in the permutation.
\end{proof}
We now choose $\pi$ as in Claim \ref{claim:perm} and construct the permutation $\tau$ of length $m$. Start with the left-crucial permutation (see Lemma \ref{lem:left_crucial}) of $\{0, \dots, 31\}$ given by 
\begin{align*}
    &(8, 2, 0, 20, 29, 19, 3, 4, 7, 5, 1, 30, 31, 21, 6, 9, 22, 12, 10, 24, 26, 23, 13, 14, 17, 15, 11,\\ &\quad \quad 27, 28, 25, 16, 18).
\end{align*}
For $ 0 \leq i \leq 2k - 2$, we define $\tau_{32 +i}$ by
\[
\tau_{32 + i} = \begin{cases}
r_5 + i/4 & i \equiv 0 \mod 4,\\
r_3 + \pi_{(i-1)/2} & i \equiv 1 , 3 \mod 4,\\
r_1 + (i-2)/4 & i \equiv 2 \mod 4,
\end{cases}
\]and we call this the \emph{middle section} of $\sigma$.
Note that so far we have only used the first $k - 1$ entries of $\pi$ and not $\pi_{k-1}$, but we will make use of the fact that the whole of $\pi$ was square-free when we check that $\sigma$ is square-free.

Let $\sigma$ be formed by appending one of the following vectors, and denote the length of the chosen suffix by $\ell$.
\begin{equation*}
    \begin{gathered}
    \left(r_4 + 1, r_4 , r_6 + 2, r_6 + 4, r_6 + 1, r_6, r_6 + 3 \right)\\ 
    \left(r_4 + 1, r_4, r_6 + 4, r_6 + 6, r_6 + 3, r_6 + 1, r_6 + 2, r_6 + 6, r_6\right)\\ 
    \left(r_4 + 1, r_4, r_6 + 3, r_6 + 4, r_6 + 2 , r_6, r_6 +1, r_6 +7, r_6 +6, r_6 +5, r_6 +8 \right)\\
    \left(r_4 +1, r_4, r_6 , r_6 + 1 , r_2 + 4, r_2 + 1, r_2 + 2, r_6 + 3, r_6 + 2, r_2 + 3, r_6 + 4, r_6 + 5, r_2\right)
    \end{gathered}
\end{equation*}
These have the patterns 
\begin{equation*}
    \begin{gathered}
    \left( 1, 0, 4, 6, 3, 2, 5\right)\\
    \left( 1,0,6,8,5,3,4,7,2\right)\\
    \left( 1,0,5,6,4,2,3,9,8,7,10\right)\\
    \left( 6,5,7,8,4,1,2,10,9,3,11,12,0\right)
    \end{gathered}
\end{equation*}
respectively. While the top two of these are right-crucial, the last two are not. From our construction, we know the last 7 entries of $\tau$ except for $\tau_{m - 6}$, $\tau_{m-4}$ and $\tau_{m-2}$. However, since $k \equiv 1 \bmod 4$ and $\pi$ begins up-down, we know that $\pi$ ends down-down-up and that $\tau_{m-6} > \tau_{m-4} > \tau_{m-2}$. Since these are the only entries in $R_3$ in the last 7 entries of $\tau$ and no entries in the suffixes are in $R_3$, this is enough to determine the pattern of the last $\ell + 7$ entries of $\sigma$. The possible patterns are 
\begin{equation*}
    \begin{gathered}
        \left( 0, 4, 7, 3, 1, 2, 8, 6, 5, 11, 13, 10, 9, 12\right)\\
        \left( 0, 4, 7, 3, 1, 2, 8, 6, 5, 13, 15, 12, 10, 11, 14, 9\right)\\
        \left(0, 4, 7, 3, 1, 2, 8, 6, 5, 12, 13, 11, 9, 10, 16, 15, 14, 17\right)\\
        \left( 0, 9, 12, 8, 1, 7, 13, 11, 10, 14, 15, 6, 3, 4, 17, 16, 5, 18, 19, 2\right),
    \end{gathered}
\end{equation*}
and these are all right-crucial (see Lemma \ref{lem:right_crucial}). Hence, any left or right extension of $\sigma$ contains a square and it only remains to argue that $\sigma$ is square-free.

First, note that by construction $\sigma$ follows the up-up-down-down pattern so does not contain any squares of length 4 and any square must have a length of at least 8. Let $S = (S_1; S_2)$ be a square. Since the middle section was constructed using the high-medium-low construction and is square-free, it must be the case that $S_1$ contains some of the first 32 entries, or that $S_2$ contains some of the last $\ell$.

Let us first consider the case where $S_1$ contains some of the first 32 entries, and further that $S_1$ is contained entirely within the first 32 entries. It is easy to check that the prefix does not contain the pattern $(3, 1, 0, 2)$ so it cannot possibly be the case that $S_2$ contains $(\tau_{32}, \dots \tau_{35})$. Hence, the square must be contained entirely within the first 35 entries, and the pattern of these is fixed for any choice of $\pi$ and does not contain a square. Suppose instead that $S_1$ is not contained entirely within the first 32 entries. If $S_1$ contains $(\tau_{31}, \tau_{32}, \tau_{33}, \tau_{34})$, then $S_2$ must contain the pattern $(0, 3, 2, 1)$. However, since $n > 94$, the corresponding entries in $S_2$ must come from the middle section of $\sigma$ and this never contains the pattern $(0, 3, 2, 1)$ as the first, medium entry is lower than the last, low entry. Hence, $S_1$ must end with either $\tau_{32}$ or $\tau_{33}$.

Suppose $S_1$ ends at $\tau_{33}$. It is easy to check that the square cannot be of length 8 as then $S_1$ would be order-isomorphic to $(0,1,3,2)$ while $S_2$ is order-isomorphic to $(0, 2, 3, 1)$. Hence, the square must be of length at least 16 and $S_1$ contains the pattern $(0, 4, 5, 3, 1, 2, 7, 6)$, but the third, high entry is lower than the final, medium entry. This pattern never occurs in the middle section of $\sigma$, while our choice of $n$ guarantees that the corresponding entries in $S_2$ come from the middle section. The case where $S_1$ ends at $\tau_{32}$ is similar. Explicitly, the square cannot be of length 8 as then $S_1$ is order-isomorphic to $(2,0,1,3)$ while $S_2$ is order-isomorphic to $(1,0,2,3)$. If the square is of length at least 16, then $S_1$ contains the pattern $(1,0,5, 6, 4, 2, 3, 7)$ which cannot occur in the middle section as the initial, medium value is lower than the sixth, low entry. Hence, $S_1$ does not intersect the first 32 entries.

Suppose $S_2$ intersects with the last $\ell$ entries. First, consider the case that $S_2$ is entirely contained in the last $\ell$ entries. Since $\ell \leq 13$, we know $S_2$ has length at most 12, and there are only two cases where the length of $S_2$ is $12$. These are 
\begin{equation*}
    \begin{gathered}
    (S_1; S_2) = (\sigma_{n - 24}, \dots, \sigma_{n-13}; \sigma_{n-12}, \dots, \sigma_{n-1})\\
    (S_1; S_2) = (\sigma_{n- 25}, \dots, \sigma_{n-14}; \sigma_{n-13}, \dots, \sigma_{n-2}).
    \end{gathered}
\end{equation*}
In both of these cases, the factor $(\sigma_{n-17}, \dots, \sigma_{n-14}) \oiso (2,0,1, 3)$ in $S_1$ corresponds to the factor $(\sigma_{n-5}, \dots, \sigma_{n-2}) \oiso (1, 0, 2, 3)$ in $S_2$. Hence, neither of these cases lead to squares and we can assume $S_2$ has length at most 8.
Since we know the pattern of the last $\ell + 7$ entries and it is square-free, we know $S_2$ must be the first 8 entries of the suffix (and $\ell \geq 8$). However, this means we have one of the following patterns.
\begin{equation*}
    \begin{gathered}
    \left( 1, 0, 5, 7, 4, 2, 3, 6\right)\\
    \left(1, 0, 5, 6, 4, 2, 3, 7\right)\\
    \left(4, 3, 5, 6, 2, 0, 1, 7\right)\\
    \end{gathered}
\end{equation*}
In each of these patterns, there is a medium entry which is lower than a low entry so these patterns never occur in the middle section of $\sigma$ and we get a contradiction.

Suppose instead that $S_2$ is not entirely contained in the last $\ell$ entries, but does intersect them. Then $S_2$ must contain at least one of the following.
\begin{equation*}
    \begin{gathered}
        \left( \tau_{m-1}, \tau_{m}, \tau_{m +1}, \tau_{m+2}\right)\\
        \left( \tau_{m-2}, \tau_{m-1}, \tau_{m}, \tau_{m+1}\right)\\
        \left( \tau_{m-3}, \tau_{m-2}, \tau_{m -1}, \tau_{m}\right)\\
    \end{gathered}
\end{equation*}

From our construction, we know that \[\left( \tau_{m-1}, \tau_{m}, \tau_{m +1}, \tau_{m+2}\right) = \left( r_6 - 1, r_4 + 1, r_4, r_6 + x\right)\] for some choice of $x$. In particular, the last, medium entry is higher than the first, high entry. For the second case we note that $\left( \tau_{m-2}, \tau_{m-1}, \tau_{m}, \tau_{m+1}\right) = (r_3 + y, r_6 - 1, r_4 + 1, r_4)$ for some suitable choice of $y$, and the first, medium entry is lower than the last, low entry. Finally, we have the case in which $S_2$ contains $(\tau_{m-3}, \tau_{m-2}, \tau_{m -1}, \tau_{m}) = (r_3 -1, r_3 + y, r_6 -1, r_4 + 1)$ for which we use the fact that $\pi_{k-1} = k-1$.  The permutation $(\tau_{32}, \dots, \tau_{m-1})$ follows the high-medium-low construction. If we continued using the high-medium-low construction, the entry $\tau_m$ would correspond to the entry $\pi_{k-1}$. Since we have chosen $\pi_{k-1} = k-1$, $\tau_m$ would be smaller than all the high entries, but greater than every other entry in $\{\tau_{32}, \dots, \tau_{m-1}\}$. In fact, this is exactly how we have chosen $\tau_m$, and $(\tau_{32}, \dots, \tau_{m})$ follows the high-medium-low construction as well. Hence, $\sigma$ is square-free.
\end{proof}

\section{Bicrucial permutations for small even $n$}
\label{sec:small-n}
It was shown by Gent, Kitaev, Konovalov, Linton and Nightingale \cite{gent2015crucial} that there are no bicrucial permutations of length $n$ when $n$ is even and $n < 32$, but that there is a bicrucial permutation when $n = 32$. Theorem \ref{thm:even} shows that there are bicrucial permutations of length $n$ for all even $n \geq 48$, so to prove Theorem \ref{thm:main} it only remains to consider the cases where $n$ is at least $34$ and at most $46$. 

To find examples of bicrucial permutations for $n= 34$ it is enough to consider permutations which begin with the initial pattern used in the proof of Theorem \ref{thm:even} and to try different ways of extending the permutation to the right, ensuring it is always square-free and checking if the resulting permutation is right-crucial. When $n = 40$ or $n = 42$, it suffices to use the construction from Theorem \ref{thm:even} with the fixed permutation used in the computer search. To find the example for $n = 46$, we looked for permutations that begin with the prefix pattern from Theorem \ref{thm:even} and also end with the relevant suffix pattern, but we did not enforce any other conditions such as how the prefix and suffix interleave. There is no bicrucial permutation of length $36$ which begins with the pattern from Theorem \ref{thm:even}. Instead, we tried extending other left-crucial permutations of length 32 until we found the example given below. The permutations can be seen in Figure \ref{fig:small-cases} in Appendix \ref{sec:small-n-pictures}.

\begin{align*}
    34&: (8, 2, 0, 21, 30, 20, 3, 4, 7, 5, 1, 31, 32, 22, 6, 9, 23, 12, 10, 25, 27, 24, 13, 14, 18, 15, 11,\\
    &\quad \quad 28, 29, 26, 17, 19, 33, 16).\\
    36&: (27, 33, 35, 22, 21, 26, 32, 31, 28, 30, 34, 19, 12, 20, 29, 25, 11, 18, 24, 9, 7, 10, 17, 16,\\
    &\quad\quad 13, 15, 23, 4, 2, 5, 14, 8, 1, 3, 6,  0).\\
    40&: (8, 2, 0, 20, 29, 19, 3, 4, 7, 5, 1, 30, 31, 21, 6, 9, 22, 12, 10, 24, 26, 23, 13, 14, 17, 15, 11,\\
    &\quad\quad 27, 28, 25, 16, 18, 34, 33, 32, 37, 39, 36, 35, 38).\\
    42&: (8, 2, 0, 20, 29, 19, 3, 4, 7, 5, 1, 30, 31, 21, 6, 9, 22, 12, 10, 24, 26, 23, 13, 14, 17, 15, 11,\\
    &\quad\quad 27, 28, 25, 16, 18, 34, 33, 32, 39, 41, 38, 36, 37, 40, 35).\\
    44&: (8, 2, 0, 20, 29, 19, 3, 4, 7, 5, 1, 30, 31, 21, 6, 9, 22, 12, 10, 24, 26, 23, 13, 14, 17, 15,11,\\
    &\quad\quad 27, 28, 25, 16, 18, 34, 33, 32, 38, 42, 37, 35, 36, 41, 40, 39, 43).\\
    46&: (8, 2, 0, 25, 34, 24, 3, 4, 7, 5, 1, 35, 36, 26, 6, 9, 27, 12, 10, 29, 31, 28, 13, 14, 17, 15, 11,\\
    &\quad\quad 32, 33, 30, 16, 23, 39, 38, 37, 40, 41, 22, 19, 20, 43, 42, 21, 44, 45, 18).
\end{align*}

\subsection{The non-existence of bicrucial permutations of length 38}
\label{sec:no-38}

To show that there are no bicrucial permutations of length 38 we used an exhaustive computer search which we describe in this section. The code can be found attached to the arXiv submission. 

Let $\sigma$ be a bicrucial permutation of length $n = 38$. Then the \emph{reverse} $\sigma'$ of $\sigma$ defined by $\sigma'_i = \sigma_{n - 1 -i}$ and the \emph{complement} $\tilde{\sigma}$ given by $\tilde{\sigma}_i = n - 1 - \sigma_i$ are both also bicrucial. Since $n$ is even and a square-free permutation must follow the up-up-down-down condition, either the start or end of $\sigma$ must follow the pattern up-up or down-down. By taking the complement if necessary we can assume the pattern is up-up, and by reversing the permutation if necessary, we can assume the permutation begins up-up.

The up-up-down-down condition also means that any square in an extension of $\sigma$ must either be of length 4 or a multiple of 8, and so every square in an extension of $\sigma$ has length at most 32. Hence, the prefix of length 31 must be left-crucial. It is possible to enumerate all 180,319,733 left-crucial permutations of length 31 that begin up-up, but simply extending these to find all such permutations of length 38 and then checking if the extended permutation is right-crucial is too computationally expensive. Instead, we relax our condition by discarding $k$ of the initial entries and try to extend the remaining entries to a right-crucial permutation of length $38 - k$. As long as $k \leq 7$, the extended permutations will have a length of at least 31 and therefore capture if the permutation can be right-crucial. Certainly, if there is a bicrucial permutation we will find its ending as a right-crucial permutation, but the existence of such a right-crucial permutation does not necessarily imply that there exists a bicrucial permutation of length 38 as we would still need to check for squares including the discarded entries. Fortunately, there turn out to be no such right-crucial permutations.

The main advantage of this method is that many of the left-crucial permutations end in the same pattern. Indeed, there are only 3,814,838 unique endings of length 24 from the 180,319,733 left-crucial permutations of length 31 that begin up-up. We are free to choose the length of the prefixes we generate and the value of $k$, although we only consider the case where $k = 7$. There are only 1,052,494,945 left-crucial permutations of length 32, a relatively modest increase of only 5.8 times. It turns out that nearly 80\% (844,852,238) of these are in fact bicrucial and there are only 1,350,082,610 left-crucial permutations of length 33 (which begin up-up), an increase of only 1.3 times. This pattern does not hold and there are 34,609,211,226 such permutations of length 34, an increase of 25.6 times. We therefore generate all left-crucial permutations of length 33 which begin up-up and discard the first 7 entries of each to get 24,763,327 unique endings, each of which we try to extend to a right-crucial permutation of length 31. Since we find no such extensions, there cannot be a bicrucial permutation of length $38$.

We generate permutations using a depth-first search. The children of a permutation $\sigma$ of length $n$ are the permutations of length $n + 1$ which begin with the pattern $\sigma$. This is easily done in practice. Given a permutation $\sigma$ of length $n$, the first child is formed by appending the entry $n$, and to move from the $i$th child to the next child we simply swap the entries $n - i + 1$ and $n - i$. Once we reach the last child, we can step backwards by removing the last entry and decrementing each entry. 

The advantage of this method of generating permutations is that we can easily prune unnecessary subtrees. For example, when we first visit a permutation we check if it contains a square, in which case we do not need to consider any children of the permutation and we can prune the subtree starting at this permutation.

When we first visit a permutation $\sigma$ of length $n \geq 3$ we also calculate a crude lower bound on the length of a left-crucial permutation $\tau$ which begins with the given pattern (if such a permutation exists), and we can reject any permutation where we know $\tau$ is of length at least 39. To calculate the bound we consider extending the permutation $\sigma$ to the left by prepending the entry $i$ for each choice of $i$ and checking if the extended permutation $\sigma^{(i)}$ contains a square. If it does, we move on to $i+1$. 
If there is not a square, we look for a ``partial square" in the permutation and compute a lower bound based on completing this square. Let $(n+1)/2 < k < n$ be the smallest multiple of 4 such that $(\sigma^{(i)}_0, \dots, \sigma^{(i)}_{k - 1})$ (obtained by extending $(\sigma_0,\dots,\sigma_{k-2})$ to the left by prepending $i$) is order-isomorphic to $(\sigma^{(i)}_{k}, \dots, \sigma^{(i)}_{n})$ (where we extend the definition of order-isomorphic to permutations of different lengths by replacing the larger permutation by its prefix of the appropriate length). Then the shortest square-free permutation beginning with $\sigma$ which contains a square when prepending $i$ is of length $2k -1$, and we can improve the lower bound to $2k-1$. If there is no such $k$, then the first part $S_1$ of a square $(S_1, S_2)$ must contain all of $\sigma^{(i)}$ and any square created by prepending $i$ must be of length at least $2m$ where $m = 4 \ceil{(n + 1)/4}$ is the smallest multiple of 4 at least $n+1$. This gives a lower bound of $2m - 1$. 

Suppose there are $i \neq j$ which both fall into this last case. The squares created by prepending $i$ and $j$ cannot both be of length $2m$, as there is a single value for $S_2$, the second half of the square, (for the given $m$) and it would need to be order-isomorphic to two different choices of $S_1$. Hence, we can improve the lower bound to $2(m + 4) -1$. More generally, we can count the number of times $t$ that $S_1$ must contain all of $\sigma'$, and we get the lower bound $2m + 8(t - 1) -1$.

Let us illustrate how we calculate the lower bound with an example. Let $\sigma = (0, 4, 5, 2, 1, 3)$. We first consider appending the entry $0$ to get the permutation $(0, 1, 5, 6, 3, 2, 4)$, which has a square $(0,1; 5, 6)$ and does not improve the lower bound. If we append the entry $1$, we get the permutation $(1, 0, 5, 6, 3, 2, 4)$. This does not contain a square of length 4, but $(1, 0, 5) \oiso (3, 2, 4)$, and we get the lower bound $7$. Prepending $2$, $3$ or $4$ works similarly, and also give the lower bound $7$. Consider appending the entry $5$ to get $\sigma^{(5)} = (5, 0, 4, 6, 2, 1, 3)$. The only possible choice for $k$ is 4, but $(5, 0, 4) \not \oiso (2, 1, 3)$. Hence, if $\tau$ is square-free but contains a square when prepending 4, then the length of $\tau$ is at least 15. There is also no viable $k$ when prepending $6$, and so we get the improved lower bound 23.

\subsection{Enumerating square-free permutations}
\label{sec:enumeration}
The code used to exhaustively generate all left-crucial permutations in the previous section can also be used to enumerate square-free, left-crucial and bicrucial permutations with only small modifications, and the results are shown in Table \ref{tab:square-free}. If $\sigma$ is square-free (or bicrucial), then the reverse, the complement and the reverse complement of $\sigma$ are also square-free (resp. bicrucial). When $n$ is even, this means we can count the number of square-free permutations by counting the square-free permutations which begin up-up and multiplying by 4. When $n$ is odd, there is not such a nice classification and it is harder to handle the reverse of the permutation. To avoid this we check only that each permutation in the depth-first search is square-free until the permutation is of length at least $n/2$, and only then do we consider exploiting the symmetry in the problem. Note that any extra work done generating square-free permutations of length $n/2$ is negligible when compared with the work to generate the permutations of length $n$.  When appending an entry to grow the permutation to length $(n+1)/2 + k$, we consider the factor 
\[
\left( \sigma_{(n+1)/2 - k}, \dots, \sigma_{(n+1)/2 + k} \right) \oiso \pi
\]
and check if $\pi$, the reverse of $\pi$, the complement of $\pi$ or the reverse complement of $\pi$ is lexicographically least. If $\pi$ is not jointly lexicographically least, then we prune the permutation from our search. If $\pi$ is the unique minimum, then we do not perform this check again when appending new entries. Multiplying the number of permutations found this way by 4 over counts permutations $\sigma$ where the reverse and the complement are equal, and we must subtract off twice the number of such permutations (once for the reverse and once for the reverse complement). 

Enumerating left-crucial permutations is slightly simpler: only the complement of a left-crucial permutation need be left-crucial, so we can handle the symmetry by only counting the left-crucial permutations which begin by going up and multiplying this result by 2. 

\begin{table}
    \centering
    \begin{tabular}{cn{15}{0}n{14}{0}n{14}{0}}\toprule
	$n$ & {\thead{Square-free}} & {\thead{Left-crucial}} & {\thead{Bicrucial}} \\
	\midrule
	1 & 1 & 0 &0\\
	2 & 2 & 0 &0\\
	3 & 6 & 0 &0\\
	4 & 12& 0 &0\\
	5 & 34& 0 &0\\
	6 & 104& 0 &0\\
	7 & 406& 60 &0\\
	8 & 1112& 140 &0\\
	9 & 3980& 518 & 54\\
	10 & 15216& 1444 & 0\\
	11 & 68034& 8556 & 0 \\
	12 & 312048& 31992 &0 \\
	13 & 1625968& 220456 & 69856 \\
	14 & 8771376& 984208 &0 \\
	15 &53270068& 7453080 & 2930016\\
	16 &319218912 & 39692800 &0\\
	17 &2135312542 & 289981136 & 40654860\\
	18 & 14420106264& 1467791790 &0\\
	19 & 109051882344 & 14316379108 & 162190472\\
	20 & 815868128288 & 86001855074 &0\\
	21 & 6772099860398 & 949804475890 & 312348610684\\
	22 & 56501841264216 &  6494842788046&0\\
	23 & 519359404861294 & 73636377696714 & 29202730580288\\
	\bottomrule
\end{tabular}
    \caption{The number of square-free, left-crucial and bicrucial permutations of length $n$. These are sequences A221989, A221990 and A238935 respectively in the OEIS \cite{oeis}.}
    \label{tab:square-free}
\end{table}

\section{Open problems}

Recently, the notion of bicrucial square-free words has been extended to the notion of \emph{extremal words}, words which are square-free but inserting any letter in any position introduces a square \cite{grytczuk2020extremal}, and this naturally extends to \emph{extremal permutations}. Since square-free permutations must follow the up-up-down-down pattern, any insertion which breaks this pattern must introduce a square and so any insertion except in positions $0$, $1$, $n-1$ or $n$ introduces a square. Extremal permutations were studied by Gent, Kitaev, Konovalov, Linton and Nightingale in \cite{gent2015crucial} where they showed that there exist small extremal permutations. In particular, they showed that there exist extremal permutations of lengths 17 and 21 but not for any other $n \leq 22$, and they conjectured that arbitrarily long extremal permutations exist. Since one only needs to consider inserting an entry close to the beginning or end as in bicrucial permutations, it seems likely that an approach similar to that used in Theorem \ref{thm:8k+3} and Theorem \ref{thm:even} may give arbitrarily long extremal permutations.

\begin{conjecture}[Conjecture 10 in \cite{gent2015crucial}]
There exist arbitrarily long extremal square-free permutations.
\end{conjecture}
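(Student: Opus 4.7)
The plan is to adapt the construction from Theorems \ref{thm:8k+3} and \ref{thm:even} to the extremal setting. As noted in the paper, the only insertions that need to be ruled out for an extremal permutation (beyond those already killed by the up-up-down-down condition) are at positions $0$, $1$, $n-1$ and $n$. I would therefore look for short ``extremal-left'' prefixes $P$ and ``extremal-right'' suffixes $Q$ with the property that every insertion at the first two (respectively last two) positions of $P$ (respectively $Q$) already creates a square inside the fixed prefix (respectively suffix), and glue them together with a long high-medium-low middle section as in the proof of Theorem \ref{thm:even}.

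First I would search for suitable $P$ and $Q$, either by computer or by extracting them from the known extremal permutations of lengths $17$ and $21$ of \cite{gent2015crucial}. The requirement on $P$ is that it is square-free, its final entries form a pattern consistent with being glued to the middle section, and for every letter inserted at position $0$ or $1$ (with larger values shifted up by one) a square is forced within a bounded prefix of the extended permutation. Since squares have length a multiple of $4$ and a short prefix forces the first half of any such square to lie near the start, only squares of length $4$, $8$, or $16$ need to be analysed, which makes the search feasible. The same is done for $Q$.

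Next I would fix a large parameter $m$ and a square-free permutation $\pi$ of length $m$ with prescribed behaviour at the two ends (analogous to Claim \ref{claim:perm}), and define $\sigma$ by placing $P$ on the left, $Q$ on the right, and filling the middle with the high-medium-low construction applied to $\pi$. The alphabet is partitioned into regions $R_0,\dots,R_k$ exactly as before, so that the high, medium and low levels of the middle section do not overlap with the values used in $P$ or $Q$. Square-freeness of $\sigma$ then follows from the same case analysis as in Theorem \ref{thm:even}: a hypothetical square $(S_1;S_2)$ must either have $S_1$ meeting $P$ or $S_2$ meeting $Q$, and for $n$ large enough the Observation forces the corresponding half of the square to lie in the middle section, where the strict height ordering of high, medium and low entries rules out the relevant patterns. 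The extremal property then holds because insertions at positions $0,1$ are handled by $P$, those at $n-1,n$ by $Q$, and all other insertions break the up-up-down-down condition.

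The main obstacle is the existence of the blocks $P$ and $Q$ with the required extremal property. Unlike in the bicrucial case, where one only needs a left-crucial (respectively right-crucial) block, here the block must additionally kill every insertion at position $1$ (respectively $n-1$), and such an insertion can produce a square reaching some distance into the middle of $\sigma$, so the block must be long enough that all such squares are confined to it. Verifying this may force $P$ and $Q$ to be substantially longer than the length-$9$ or length-$32$ prefixes used in the bicrucial proofs, and they must simultaneously be compatible with the high-medium-low construction (the boundary values of $P$ and $Q$ must match the region structure and the parity of the up-up-down-down pattern at the gluing points). If workable $P$ and $Q$ can be exhibited by a finite computer search for one length in each residue class modulo the period of the high-medium-low scheme, the construction produces extremal permutations of all sufficiently large lengths in that class, which together with a finite check for the small $n$ would establish the conjecture.
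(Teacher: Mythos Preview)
The statement you are attempting to prove is not proved in the paper: it is listed in the \emph{Open problems} section as a conjecture of Gent, Kitaev, Konovalov, Linton and Nightingale, and the paper only remarks that ``it seems likely that an approach similar to that used in Theorem~\ref{thm:8k+3} and Theorem~\ref{thm:even} may give arbitrarily long extremal permutations.'' There is therefore no paper proof to compare against.

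Your proposal is precisely the programme the authors suggest, and you have correctly identified the one genuine obstacle: the existence of prefix and suffix blocks $P$ and $Q$ that kill insertions at positions $0,1$ (respectively $n-1,n$) using only squares contained within the block itself, while remaining compatible with the high-medium-low gluing. You do not exhibit such blocks, and the existence of the length-$17$ and length-$21$ extremal permutations from \cite{gent2015crucial} does not obviously yield them, since those examples need not split into a reusable prefix and suffix with the required boundary behaviour. Until concrete $P$ and $Q$ are produced (and the square-freeness case analysis carried out for them), what you have written is a plausible plan of attack rather than a proof; this is exactly the status the paper assigns to the problem.
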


It was shown by Grytczuk, Kordulewski and Niewiadomski \cite{grytczuk2020extremal} that there are arbitrarily long extremal square-free words over an alphabet of size 3, and the $n$ for which there exist extremal words of length $n$ was completely classified by Mol and Rampersad \cite{mol2020lengths}. In particular, there are extremal ternary words for all $n \geq 87$. There are no known extremal words over an alphabet of size $4$ (or greater), and it is conjectured that none exist.

\begin{conjecture}[Conjecture 12 in \cite{grytczuk2020extremal}]
There are no extremal square-free words over an alphabet of size 4.
\end{conjecture}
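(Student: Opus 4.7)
The goal is to show that for every square-free word $W$ over $\{1,2,3,4\}$, there exist a position $i \in \{0,\dots,|W|\}$ and a letter $a \in \{1,2,3,4\}$ such that inserting $a$ at position $i$ yields a square-free word. My plan combines a local structural analysis at each candidate insertion position with an exhaustive depth-first search in the style of Section~\ref{sec:no-38}.

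The starting point is a simple observation about ``forbidden letters'' at a position. If $W$ is square-free and we insert $a$ at position $i$ to form $W'$, then any new square of length $2k$ in $W'$ must contain position $i$, and this forces a single equality between $a$ and some entry of $W$, namely $a=W[i+k-1]$ (if the inserted letter lies in the first half of the new square) or $a=W[i-k]$ (if it lies in the second half). Thus the set $B_i$ of letters whose insertion at $i$ creates a square is contained in $\{W[i+k-1]:k\ge 1\}\cup\{W[i-k]:k\ge 1\}$, intersected with the condition that the surrounding equalities actually form a square. The extremality hypothesis demands $B_i=\{1,2,3,4\}$ for every $i$; in particular every letter of the alphabet must appear as a ``matching'' entry at every position, with prescribed offsets. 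First I would convert these identities (for $i=0,1,\ldots$ and symmetrically from the right) into a system of equations on $W$, reading off that both the prefix and suffix of $W$ of length $O(1)$ contain all four letters in rigidly constrained patterns, in analogy with the crucial-permutation arguments used in Theorems~\ref{thm:8k+3} and~\ref{thm:even}.

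Next I would try to upgrade these local constraints into a global length bound $N$. The intuition is that a 4-letter alphabet provides one more ``escape letter'' than a 3-letter one, and this extra freedom should overdetermine the simultaneous obstruction conditions at the $4(|W|+1)$ insertion slots once $|W|$ is large enough. A plausible route is to exhibit a finite family $\mathcal{F}$ of factors such that (i) every sufficiently long square-free word over $\{1,2,3,4\}$ contains a factor from $\mathcal{F}$ (unavoidability via a Dejean-style density bound), and (ii) each factor in $\mathcal{F}$ admits an explicit safe internal insertion. With such an $N$ in hand, the remainder is an exhaustive search: enumerate square-free words of length at most $N$ over $\{1,2,3,4\}$ using the depth-first procedure described in Section~\ref{sec:enumeration}, pruning as soon as a safe insertion is detected anywhere in the current word.

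The principal obstacle is the global step. Over a ternary alphabet the analogous statement is \emph{false} by~\cite{grytczuk2020extremal,mol2020lengths}, so any proof must exploit the fourth letter in an essential, quantitative way; soft compactness will not produce a bound small enough to verify by computer. Identifying the right unavoidable family $\mathcal{F}$, or equivalently exhibiting a positive-density supply of safe insertion sites in every long square-free 4-letter word, is where the real difficulty lies; failing a clean combinatorial argument, a SAT or constraint-programming encoding of extremality in the spirit of~\cite{gent2015crucial} may be the most promising fallback for at least pushing the known nonexistence bound upward.
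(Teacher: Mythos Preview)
The statement you are attempting to prove is presented in the paper as an \emph{open conjecture}, not as a theorem: the paper explicitly notes that the question remains open for alphabets of size smaller than~$15$ and gives no proof. There is therefore no ``paper's own proof'' to compare against.

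Your proposal is not a proof but a research programme, and you yourself identify its principal gap: there is no argument establishing the global length bound~$N$. The local observation about the set~$B_i$ of forbidden letters is correct and well known, but it does not by itself yield any finiteness. The heart of the proposed approach, namely producing an unavoidable family~$\mathcal{F}$ of factors each admitting a safe internal insertion, is precisely the open problem; you give no candidate for~$\mathcal{F}$, no density argument, and no mechanism explaining why four letters should force such a family when three letters provably do not. Absent that step, the computer search in the final paragraph cannot even begin, since it is conditioned on an~$N$ that has not been produced. As written, the proposal is a reasonable outline of how one might attack the conjecture, but it contains no new idea beyond what is already implicit in the existing literature, and it does not constitute a proof.
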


It was recently shown that there are no extremal square-free words over an alphabet of size 17 (or greater) \cite{hong2021no}, but the question remains open for smaller alphabets.

Another interesting problem suggested by Kitaev \cite{Kitaev} is to replace squares with higher powers. A \emph{$k$th power} in a permutation $\sigma\in S_n$ is a factor
\[
(S_1;S_2;\dots;S_k)=(\sigma_{s},\dots,\sigma_{s+\ell-1};\dots ;\sigma_{s+(k-1)\ell},\dots,\sigma_{s+k\ell-1})
\]
with $\ell\geq 2$ and $S_i \oiso S_j$ for all $i,j\in [k]$. The definition of bicrucial with respect to squares easily extends to bicrucial with respect to containing a $k$th power. That is, we say the permutation $\sigma$ is \emph{bicrucial with respect to containing a $k$th power} if $\sigma$ does not contain a $k$th power, but any left or right extension of $\sigma$ does contain a $k$th power.

\begin{problem}
    Are there arbitrarily long permutations which are bicrucial with respect to containing a $k$th power?
\end{problem}

\paragraph{Acknowledgements.} We would like to thank the two anonymous referees for their helpful comments which have improved this paper.

\bibliographystyle{abbrv}
\bibliography{bicrucial}

\appendix

\section{Small crucial permutations}
\label{sec:small-ends}
For the convenience of the reader, we add proofs here for some of the more routine checks from the proof of Theorem \ref{thm:even}.
\begin{lemma}
\label{lem:left_crucial}
	The following permutation is left-crucial.
	\begin{align*}
		\sigma &= (8, 2, 0, 20, 29, 19, 3, 4, 7, 5, 1, 30, 31, 21, 6, 9, 22, 12, 10, 24, 26, 23, 13, 14, 17, 15, 11,\\ &\quad \quad 27, 28, 25, 16, 18).
	\end{align*}
\end{lemma}
\begin{proof}
	We start with the easiest case, prepending $x \geq 9$. In this case, the extended permutation begins with the pattern $(3, 2, 1, 0)$ and so contains a square of length 4. If we instead prepend $3 \leq x \leq 8$, the permutation begins with \[\left(x, 9, 2, 0, 21, 30, 20, y\right),\]
	where $y=4$ if $x=3$ and $y=3$ if $4\leq x\leq 8$. This contains a square since $(x, 9, 2, 0) \sim (2, 3, 1, 0) \sim (20, 29, 19, y)$. Prepending $1 \leq x \leq 2$ gives a square of length 16. Indeed, both the first 8 and the second 8 entries both have the pattern $(1, 4, 2, 0, 6, 7, 5, 3)$. Finally, prepending $0$ gives a square of length 32.
\end{proof}

\begin{lemma}	
\label{lem:right_crucial}
	The following permutations are right-crucial.	
		\begin{align*}
			\sigma^{(1)} &= \left( 0, 4, 7, 3, 1, 2, 8, 6, 5, 11, 13, 10, 9, 12\right)\\
			\sigma^{(2)} &= \left( 0, 4, 7, 3, 1, 2, 8, 6, 5, 13, 15, 12, 10, 11, 14, 9\right)\\
			\sigma^{(3)} &= \left(0, 4, 7, 3, 1, 2, 8, 6, 5, 12, 13, 11, 9, 10, 16, 15, 14, 17\right)\\
			\sigma^{(4)} &= \left( 0, 9, 12, 8, 1, 7, 13, 11, 10, 14, 15, 6, 3, 4, 17, 16, 5, 18, 19, 2\right).
		\end{align*}
\end{lemma}
\begin{proof}[Sketch proof]
	We only specify the lengths of squares when appending different values and leave the straightforward task of checking that there are indeed squares of the given lengths to the reader.
	For example, the table below says that adding $x\leq 12$ to the end of $\sigma^{(1)}$ (and increasing all values that were previously at least $x$ by $1$) creates a square of length $4$, whereas adding $x\geq 13$ gives a square of length 8.\\
	
			\begin{center}
			\begin{tabular}{@{}cccc@{}}
				\toprule
				& \multicolumn{3}{c}{Square length}\\
				\cmidrule(r){2-4}
				Permutation & 4 & 8 & 16\\
				\midrule
				$\sigma^{(1)}$ & $\leq 12$  & $\geq 13$ &   \\
				$\sigma^{(2)}$ & $\geq 9$  & $\leq 8$ &   \\
				$\sigma^{(3)}$ & $\leq 17$  & & 18   \\
				$\sigma^{(4)}$ & $\leq 2$  & &$\geq 3$\\
				\bottomrule
			\end{tabular}
			\end{center}
			\par \vspace{-1.6\baselineskip} 
			\qedhere
\end{proof}
\section{Constructions from Theorem \ref{thm:even}}
\begin{figure}[h]
    \centering
    \begin{subfigure}{\textwidth}
    \begin{tikzpicture}[xscale=\textwidth/65cm, yscale=4cm/118cm]
\draw node[style=vertex](0) at (0,8) {};
\draw node[style=vertex](1) at (1,2) {};
\draw node[style=vertex](2) at (2,0) {};
\draw node[style=vertex](3) at (3,20) {};
\draw node[style=vertex](4) at (4,29) {};
\draw node[style=vertex](5) at (5,19) {};
\draw node[style=vertex](6) at (6,3) {};
\draw node[style=vertex](7) at (7,4) {};
\draw node[style=vertex](8) at (8,7) {};
\draw node[style=vertex](9) at (9,5) {};
\draw node[style=vertex](10) at (10,1) {};
\draw node[style=vertex](11) at (11,30) {};
\draw node[style=vertex](12) at (12,31) {};
\draw node[style=vertex](13) at (13,21) {};
\draw node[style=vertex](14) at (14,6) {};
\draw node[style=vertex](15) at (15,9) {};
\draw node[style=vertex](16) at (16,22) {};
\draw node[style=vertex](17) at (17,12) {};
\draw node[style=vertex](18) at (18,10) {};
\draw node[style=vertex](19) at (19,24) {};
\draw node[style=vertex](20) at (20,26) {};
\draw node[style=vertex](21) at (21,23) {};
\draw node[style=vertex](22) at (22,13) {};
\draw node[style=vertex](23) at (23,14) {};
\draw node[style=vertex](24) at (24,17) {};
\draw node[style=vertex](25) at (25,15) {};
\draw node[style=vertex](26) at (26,11) {};
\draw node[style=vertex](27) at (27,27) {};
\draw node[style=vertex](28) at (28,28) {};
\draw node[style=vertex](29) at (29,25) {};
\draw node[style=vertex](30) at (30,16) {};
\draw node[style=vertex](31) at (31,18) {};
\draw node[style=vertex](32) at (32,97) {};
\draw node[style=vertex](33) at (33,69) {};
\draw node[style=vertex](34) at (34,41) {};
\draw node[style=vertex](35) at (35,74) {};
\draw node[style=vertex](36) at (36,98) {};
\draw node[style=vertex](37) at (37,72) {};
\draw node[style=vertex](38) at (38,42) {};
\draw node[style=vertex](39) at (39,65) {};
\draw node[style=vertex](40) at (40,99) {};
\draw node[style=vertex](41) at (41,71) {};
\draw node[style=vertex](42) at (42,43) {};
\draw node[style=vertex](43) at (43,75) {};
\draw node[style=vertex](44) at (44,100) {};
\draw node[style=vertex](45) at (45,68) {};
\draw node[style=vertex](46) at (46,44) {};
\draw node[style=vertex](47) at (47,66) {};
\draw node[style=vertex](48) at (48,101) {};
\draw node[style=vertex](49) at (49,70) {};
\draw node[style=vertex](50) at (50,45) {};
\draw node[style=vertex](51) at (51,76) {};
\draw node[style=vertex](52) at (52,102) {};
\draw node[style=vertex](53) at (53,73) {};
\draw node[style=vertex](54) at (54,46) {};
\draw node[style=vertex](55) at (55,67) {};
\draw node[style=vertex](56) at (56,103) {};
\draw node[style=vertex](57) at (57,87) {};
\draw node[style=vertex](58) at (58,86) {};
\draw node[style=vertex](59) at (59,115) {};
\draw node[style=vertex](60) at (60,117) {};
\draw node[style=vertex](61) at (61,114) {};
\draw node[style=vertex](62) at (62,113) {};
\draw node[style=vertex](63) at (63,116) {};
\draw (0) -- (1) -- (2) -- (3) -- (4) -- (5) -- (6) -- (7) -- (8) -- (9) -- (10) -- (11) -- (12) -- (13) -- (14) -- (15) -- (16) -- (17) -- (18) -- (19) -- (20) -- (21) -- (22) -- (23) -- (24) -- (25) -- (26) -- (27) -- (28) -- (29) -- (30) -- (31) -- (32) -- (33) -- (34) -- (35) -- (36) -- (37) -- (38) -- (39) -- (40) -- (41) -- (42) -- (43) -- (44) -- (45) -- (46) -- (47) -- (48) -- (49) -- (50) -- (51) -- (52) -- (53) -- (54) -- (55) -- (56) -- (57) -- (58) -- (59) -- (60) -- (61) -- (62) -- (63) ;

\draw[dashed] (-1, 36) -- (64, 36);
\draw[dashed] (-1, 51) -- (64, 51);
\draw[dashed] (-1, 60) -- (64, 60);
\draw[dashed] (-1, 81) -- (64, 81);
\draw[dashed] (-1, 92) -- (64, 92);
\draw[dashed] (-1, 108) -- (64, 108);
\end{tikzpicture}
\caption{}
\end{subfigure}

\begin{subfigure}{\textwidth}
\begin{tikzpicture}[xscale=\textwidth/67cm, yscale=4cm/120cm]
\draw node[style=vertex](0) at (0,8) {};
\draw node[style=vertex](1) at (1,2) {};
\draw node[style=vertex](2) at (2,0) {};
\draw node[style=vertex](3) at (3,20) {};
\draw node[style=vertex](4) at (4,29) {};
\draw node[style=vertex](5) at (5,19) {};
\draw node[style=vertex](6) at (6,3) {};
\draw node[style=vertex](7) at (7,4) {};
\draw node[style=vertex](8) at (8,7) {};
\draw node[style=vertex](9) at (9,5) {};
\draw node[style=vertex](10) at (10,1) {};
\draw node[style=vertex](11) at (11,30) {};
\draw node[style=vertex](12) at (12,31) {};
\draw node[style=vertex](13) at (13,21) {};
\draw node[style=vertex](14) at (14,6) {};
\draw node[style=vertex](15) at (15,9) {};
\draw node[style=vertex](16) at (16,22) {};
\draw node[style=vertex](17) at (17,12) {};
\draw node[style=vertex](18) at (18,10) {};
\draw node[style=vertex](19) at (19,24) {};
\draw node[style=vertex](20) at (20,26) {};
\draw node[style=vertex](21) at (21,23) {};
\draw node[style=vertex](22) at (22,13) {};
\draw node[style=vertex](23) at (23,14) {};
\draw node[style=vertex](24) at (24,17) {};
\draw node[style=vertex](25) at (25,15) {};
\draw node[style=vertex](26) at (26,11) {};
\draw node[style=vertex](27) at (27,27) {};
\draw node[style=vertex](28) at (28,28) {};
\draw node[style=vertex](29) at (29,25) {};
\draw node[style=vertex](30) at (30,16) {};
\draw node[style=vertex](31) at (31,18) {};
\draw node[style=vertex](32) at (32,97) {};
\draw node[style=vertex](33) at (33,69) {};
\draw node[style=vertex](34) at (34,41) {};
\draw node[style=vertex](35) at (35,74) {};
\draw node[style=vertex](36) at (36,98) {};
\draw node[style=vertex](37) at (37,72) {};
\draw node[style=vertex](38) at (38,42) {};
\draw node[style=vertex](39) at (39,65) {};
\draw node[style=vertex](40) at (40,99) {};
\draw node[style=vertex](41) at (41,71) {};
\draw node[style=vertex](42) at (42,43) {};
\draw node[style=vertex](43) at (43,75) {};
\draw node[style=vertex](44) at (44,100) {};
\draw node[style=vertex](45) at (45,68) {};
\draw node[style=vertex](46) at (46,44) {};
\draw node[style=vertex](47) at (47,66) {};
\draw node[style=vertex](48) at (48,101) {};
\draw node[style=vertex](49) at (49,70) {};
\draw node[style=vertex](50) at (50,45) {};
\draw node[style=vertex](51) at (51,76) {};
\draw node[style=vertex](52) at (52,102) {};
\draw node[style=vertex](53) at (53,73) {};
\draw node[style=vertex](54) at (54,46) {};
\draw node[style=vertex](55) at (55,67) {};
\draw node[style=vertex](56) at (56,103) {};
\draw node[style=vertex](57) at (57,87) {};
\draw node[style=vertex](58) at (58,86) {};
\draw node[style=vertex](59) at (59,117) {};
\draw node[style=vertex](60) at (60,119) {};
\draw node[style=vertex](61) at (61,116) {};
\draw node[style=vertex](62) at (62,114) {};
\draw node[style=vertex](63) at (63,115) {};
\draw node[style=vertex](64) at (64,118) {};
\draw node[style=vertex](65) at (65,113) {};
\draw (0) -- (1) -- (2) -- (3) -- (4) -- (5) -- (6) -- (7) -- (8) -- (9) -- (10) -- (11) -- (12) -- (13) -- (14) -- (15) -- (16) -- (17) -- (18) -- (19) -- (20) -- (21) -- (22) -- (23) -- (24) -- (25) -- (26) -- (27) -- (28) -- (29) -- (30) -- (31) -- (32) -- (33) -- (34) -- (35) -- (36) -- (37) -- (38) -- (39) -- (40) -- (41) -- (42) -- (43) -- (44) -- (45) -- (46) -- (47) -- (48) -- (49) -- (50) -- (51) -- (52) -- (53) -- (54) -- (55) -- (56) -- (57) -- (58) -- (59) -- (60) -- (61) -- (62) -- (63) -- (64) -- (65) ;
\draw[dashed] (-1, 36) -- (66, 36);
\draw[dashed] (-1, 51) -- (66, 51);
\draw[dashed] (-1, 60) -- (66, 60);
\draw[dashed] (-1, 81) -- (66, 81);
\draw[dashed] (-1, 92) -- (66, 92);
\draw[dashed] (-1, 108) -- (66, 108);
\end{tikzpicture}
\caption{}
\end{subfigure}

\begin{subfigure}{\textwidth}
\begin{tikzpicture}[xscale=\textwidth/69cm, yscale=4cm/122cm]
\draw node[style=vertex](0) at (0,8) {};
\draw node[style=vertex](1) at (1,2) {};
\draw node[style=vertex](2) at (2,0) {};
\draw node[style=vertex](3) at (3,20) {};
\draw node[style=vertex](4) at (4,29) {};
\draw node[style=vertex](5) at (5,19) {};
\draw node[style=vertex](6) at (6,3) {};
\draw node[style=vertex](7) at (7,4) {};
\draw node[style=vertex](8) at (8,7) {};
\draw node[style=vertex](9) at (9,5) {};
\draw node[style=vertex](10) at (10,1) {};
\draw node[style=vertex](11) at (11,30) {};
\draw node[style=vertex](12) at (12,31) {};
\draw node[style=vertex](13) at (13,21) {};
\draw node[style=vertex](14) at (14,6) {};
\draw node[style=vertex](15) at (15,9) {};
\draw node[style=vertex](16) at (16,22) {};
\draw node[style=vertex](17) at (17,12) {};
\draw node[style=vertex](18) at (18,10) {};
\draw node[style=vertex](19) at (19,24) {};
\draw node[style=vertex](20) at (20,26) {};
\draw node[style=vertex](21) at (21,23) {};
\draw node[style=vertex](22) at (22,13) {};
\draw node[style=vertex](23) at (23,14) {};
\draw node[style=vertex](24) at (24,17) {};
\draw node[style=vertex](25) at (25,15) {};
\draw node[style=vertex](26) at (26,11) {};
\draw node[style=vertex](27) at (27,27) {};
\draw node[style=vertex](28) at (28,28) {};
\draw node[style=vertex](29) at (29,25) {};
\draw node[style=vertex](30) at (30,16) {};
\draw node[style=vertex](31) at (31,18) {};
\draw node[style=vertex](32) at (32,97) {};
\draw node[style=vertex](33) at (33,69) {};
\draw node[style=vertex](34) at (34,41) {};
\draw node[style=vertex](35) at (35,74) {};
\draw node[style=vertex](36) at (36,98) {};
\draw node[style=vertex](37) at (37,72) {};
\draw node[style=vertex](38) at (38,42) {};
\draw node[style=vertex](39) at (39,65) {};
\draw node[style=vertex](40) at (40,99) {};
\draw node[style=vertex](41) at (41,71) {};
\draw node[style=vertex](42) at (42,43) {};
\draw node[style=vertex](43) at (43,75) {};
\draw node[style=vertex](44) at (44,100) {};
\draw node[style=vertex](45) at (45,68) {};
\draw node[style=vertex](46) at (46,44) {};
\draw node[style=vertex](47) at (47,66) {};
\draw node[style=vertex](48) at (48,101) {};
\draw node[style=vertex](49) at (49,70) {};
\draw node[style=vertex](50) at (50,45) {};
\draw node[style=vertex](51) at (51,76) {};
\draw node[style=vertex](52) at (52,102) {};
\draw node[style=vertex](53) at (53,73) {};
\draw node[style=vertex](54) at (54,46) {};
\draw node[style=vertex](55) at (55,67) {};
\draw node[style=vertex](56) at (56,103) {};
\draw node[style=vertex](57) at (57,87) {};
\draw node[style=vertex](58) at (58,86) {};
\draw node[style=vertex](59) at (59,116) {};
\draw node[style=vertex](60) at (60,117) {};
\draw node[style=vertex](61) at (61,115) {};
\draw node[style=vertex](62) at (62,113) {};
\draw node[style=vertex](63) at (63,114) {};
\draw node[style=vertex](64) at (64,120) {};
\draw node[style=vertex](65) at (65,119) {};
\draw node[style=vertex](66) at (66,118) {};
\draw node[style=vertex](67) at (67,121) {};
\draw (0) -- (1) -- (2) -- (3) -- (4) -- (5) -- (6) -- (7) -- (8) -- (9) -- (10) -- (11) -- (12) -- (13) -- (14) -- (15) -- (16) -- (17) -- (18) -- (19) -- (20) -- (21) -- (22) -- (23) -- (24) -- (25) -- (26) -- (27) -- (28) -- (29) -- (30) -- (31) -- (32) -- (33) -- (34) -- (35) -- (36) -- (37) -- (38) -- (39) -- (40) -- (41) -- (42) -- (43) -- (44) -- (45) -- (46) -- (47) -- (48) -- (49) -- (50) -- (51) -- (52) -- (53) -- (54) -- (55) -- (56) -- (57) -- (58) -- (59) -- (60) -- (61) -- (62) -- (63) -- (64) -- (65) -- (66) -- (67) ;
\draw[dashed] (-1, 36) -- (68, 36);
\draw[dashed] (-1, 51) -- (68, 51);
\draw[dashed] (-1, 60) -- (68, 60);
\draw[dashed] (-1, 81) -- (68, 81);
\draw[dashed] (-1, 92) -- (68, 92);
\draw[dashed] (-1, 108) -- (68, 108);
\end{tikzpicture}
\caption{}
\end{subfigure}

\caption{Bicrucial permutations of length 64, 66 and 68 constructed as in the proof of Theorem \ref{thm:even}. The dashed lines divide $\{0, \dots, n-1\}$ into the 7 different regions used in the construction.}
\vspace{-1.5cm}
\end{figure}
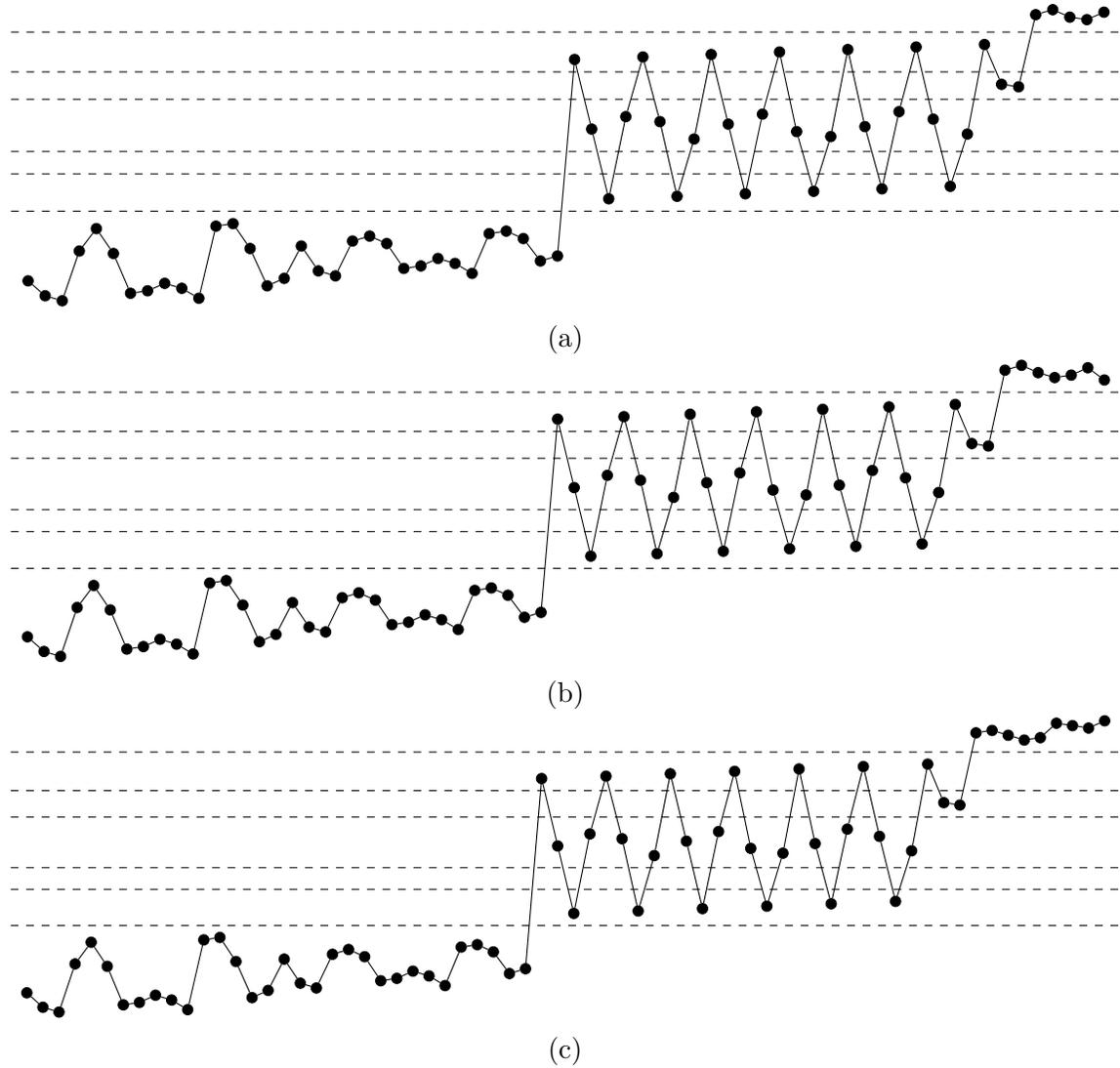
\newpage

\section{Bicrucial permutations for small even $n$}
\label{sec:small-n-pictures}
\begin{figure}[h!]
\begin{subfigure}{\textwidth}
\begin{tikzpicture}[xscale=\textwidth/34cm, yscale=4cm/34cm]
\draw node[style=vertex](0) at (0,8) {};
\draw node[style=vertex](1) at (1,2) {};
\draw node[style=vertex](2) at (2,0) {};
\draw node[style=vertex](3) at (3,21) {};
\draw node[style=vertex](4) at (4,30) {};
\draw node[style=vertex](5) at (5,20) {};
\draw node[style=vertex](6) at (6,3) {};
\draw node[style=vertex](7) at (7,4) {};
\draw node[style=vertex](8) at (8,7) {};
\draw node[style=vertex](9) at (9,5) {};
\draw node[style=vertex](10) at (10,1) {};
\draw node[style=vertex](11) at (11,31) {};
\draw node[style=vertex](12) at (12,32) {};
\draw node[style=vertex](13) at (13,22) {};
\draw node[style=vertex](14) at (14,6) {};
\draw node[style=vertex](15) at (15,9) {};
\draw node[style=vertex](16) at (16,23) {};
\draw node[style=vertex](17) at (17,12) {};
\draw node[style=vertex](18) at (18,10) {};
\draw node[style=vertex](19) at (19,25) {};
\draw node[style=vertex](20) at (20,27) {};
\draw node[style=vertex](21) at (21,24) {};
\draw node[style=vertex](22) at (22,13) {};
\draw node[style=vertex](23) at (23,14) {};
\draw node[style=vertex](24) at (24,18) {};
\draw node[style=vertex](25) at (25,15) {};
\draw node[style=vertex](26) at (26,11) {};
\draw node[style=vertex](27) at (27,28) {};
\draw node[style=vertex](28) at (28,29) {};
\draw node[style=vertex](29) at (29,26) {};
\draw node[style=vertex](30) at (30,17) {};
\draw node[style=vertex](31) at (31,19) {};
\draw node[style=vertex](32) at (32,33) {};
\draw node[style=vertex](33) at (33,16) {};
\draw (0) -- (1) -- (2) -- (3) -- (4) -- (5) -- (6) -- (7) -- (8) -- (9) -- (10) -- (11) -- (12) -- (13) -- (14) -- (15) -- (16) -- (17) -- (18) -- (19) -- (20) -- (21) -- (22) -- (23) -- (24) -- (25) -- (26) -- (27) -- (28) -- (29) -- (30) -- (31) -- (32) -- (33) ;
\end{tikzpicture}
\caption{}
\end{subfigure}
\begin{subfigure}{\textwidth}
\begin{tikzpicture}[xscale=\textwidth/36cm, yscale=4cm/36cm]
\draw node[style=vertex](0) at (0,8) {};
\draw node[style=vertex](1) at (1,2) {};
\draw node[style=vertex](2) at (2,0) {};
\draw node[style=vertex](3) at (3,13) {};
\draw node[style=vertex](4) at (4,14) {};
\draw node[style=vertex](5) at (5,9) {};
\draw node[style=vertex](6) at (6,3) {};
\draw node[style=vertex](7) at (7,4) {};
\draw node[style=vertex](8) at (8,7) {};
\draw node[style=vertex](9) at (9,5) {};
\draw node[style=vertex](10) at (10,1) {};
\draw node[style=vertex](11) at (11,16) {};
\draw node[style=vertex](12) at (12,23) {};
\draw node[style=vertex](13) at (13,15) {};
\draw node[style=vertex](14) at (14,6) {};
\draw node[style=vertex](15) at (15,10) {};
\draw node[style=vertex](16) at (16,24) {};
\draw node[style=vertex](17) at (17,17) {};
\draw node[style=vertex](18) at (18,11) {};
\draw node[style=vertex](19) at (19,26) {};
\draw node[style=vertex](20) at (20,28) {};
\draw node[style=vertex](21) at (21,25) {};
\draw node[style=vertex](22) at (22,18) {};
\draw node[style=vertex](23) at (23,19) {};
\draw node[style=vertex](24) at (24,22) {};
\draw node[style=vertex](25) at (25,20) {};
\draw node[style=vertex](26) at (26,12) {};
\draw node[style=vertex](27) at (27,31) {};
\draw node[style=vertex](28) at (28,33) {};
\draw node[style=vertex](29) at (29,30) {};
\draw node[style=vertex](30) at (30,21) {};
\draw node[style=vertex](31) at (31,27) {};
\draw node[style=vertex](32) at (32,34) {};
\draw node[style=vertex](33) at (33,32) {};
\draw node[style=vertex](34) at (34,29) {};
\draw node[style=vertex](35) at (35,35) {};
\draw (0) -- (1) -- (2) -- (3) -- (4) -- (5) -- (6) -- (7) -- (8) -- (9) -- (10) -- (11) -- (12) -- (13) -- (14) -- (15) -- (16) -- (17) -- (18) -- (19) -- (20) -- (21) -- (22) -- (23) -- (24) -- (25) -- (26) -- (27) -- (28) -- (29) -- (30) -- (31) -- (32) -- (33) -- (34) -- (35) ;
\end{tikzpicture}
\caption{}
\end{subfigure}
\begin{subfigure}{\textwidth}
\begin{tikzpicture}[xscale=\textwidth/40cm, yscale=4cm/40cm]
\draw node[style=vertex](0) at (0,8) {};
\draw node[style=vertex](1) at (1,2) {};
\draw node[style=vertex](2) at (2,0) {};
\draw node[style=vertex](3) at (3,20) {};
\draw node[style=vertex](4) at (4,29) {};
\draw node[style=vertex](5) at (5,19) {};
\draw node[style=vertex](6) at (6,3) {};
\draw node[style=vertex](7) at (7,4) {};
\draw node[style=vertex](8) at (8,7) {};
\draw node[style=vertex](9) at (9,5) {};
\draw node[style=vertex](10) at (10,1) {};
\draw node[style=vertex](11) at (11,30) {};
\draw node[style=vertex](12) at (12,31) {};
\draw node[style=vertex](13) at (13,21) {};
\draw node[style=vertex](14) at (14,6) {};
\draw node[style=vertex](15) at (15,9) {};
\draw node[style=vertex](16) at (16,22) {};
\draw node[style=vertex](17) at (17,12) {};
\draw node[style=vertex](18) at (18,10) {};
\draw node[style=vertex](19) at (19,24) {};
\draw node[style=vertex](20) at (20,26) {};
\draw node[style=vertex](21) at (21,23) {};
\draw node[style=vertex](22) at (22,13) {};
\draw node[style=vertex](23) at (23,14) {};
\draw node[style=vertex](24) at (24,17) {};
\draw node[style=vertex](25) at (25,15) {};
\draw node[style=vertex](26) at (26,11) {};
\draw node[style=vertex](27) at (27,27) {};
\draw node[style=vertex](28) at (28,28) {};
\draw node[style=vertex](29) at (29,25) {};
\draw node[style=vertex](30) at (30,16) {};
\draw node[style=vertex](31) at (31,18) {};
\draw node[style=vertex](32) at (32,34) {};
\draw node[style=vertex](33) at (33,33) {};
\draw node[style=vertex](34) at (34,32) {};
\draw node[style=vertex](35) at (35,37) {};
\draw node[style=vertex](36) at (36,39) {};
\draw node[style=vertex](37) at (37,36) {};
\draw node[style=vertex](38) at (38,35) {};
\draw node[style=vertex](39) at (39,38) {};
\draw (0) -- (1) -- (2) -- (3) -- (4) -- (5) -- (6) -- (7) -- (8) -- (9) -- (10) -- (11) -- (12) -- (13) -- (14) -- (15) -- (16) -- (17) -- (18) -- (19) -- (20) -- (21) -- (22) -- (23) -- (24) -- (25) -- (26) -- (27) -- (28) -- (29) -- (30) -- (31) -- (32) -- (33) -- (34) -- (35) -- (36) -- (37) -- (38) -- (39) ;
\end{tikzpicture}
\caption{}
\end{subfigure}
\end{figure}
\begin{figure}\ContinuedFloat
\begin{subfigure}{\textwidth}
\begin{tikzpicture}[xscale=\textwidth/42cm, yscale=4cm/42cm]
\draw node[style=vertex](0) at (0,8) {};
\draw node[style=vertex](1) at (1,2) {};
\draw node[style=vertex](2) at (2,0) {};
\draw node[style=vertex](3) at (3,20) {};
\draw node[style=vertex](4) at (4,29) {};
\draw node[style=vertex](5) at (5,19) {};
\draw node[style=vertex](6) at (6,3) {};
\draw node[style=vertex](7) at (7,4) {};
\draw node[style=vertex](8) at (8,7) {};
\draw node[style=vertex](9) at (9,5) {};
\draw node[style=vertex](10) at (10,1) {};
\draw node[style=vertex](11) at (11,30) {};
\draw node[style=vertex](12) at (12,31) {};
\draw node[style=vertex](13) at (13,21) {};
\draw node[style=vertex](14) at (14,6) {};
\draw node[style=vertex](15) at (15,9) {};
\draw node[style=vertex](16) at (16,22) {};
\draw node[style=vertex](17) at (17,12) {};
\draw node[style=vertex](18) at (18,10) {};
\draw node[style=vertex](19) at (19,24) {};
\draw node[style=vertex](20) at (20,26) {};
\draw node[style=vertex](21) at (21,23) {};
\draw node[style=vertex](22) at (22,13) {};
\draw node[style=vertex](23) at (23,14) {};
\draw node[style=vertex](24) at (24,17) {};
\draw node[style=vertex](25) at (25,15) {};
\draw node[style=vertex](26) at (26,11) {};
\draw node[style=vertex](27) at (27,27) {};
\draw node[style=vertex](28) at (28,28) {};
\draw node[style=vertex](29) at (29,25) {};
\draw node[style=vertex](30) at (30,16) {};
\draw node[style=vertex](31) at (31,18) {};
\draw node[style=vertex](32) at (32,34) {};
\draw node[style=vertex](33) at (33,33) {};
\draw node[style=vertex](34) at (34,32) {};
\draw node[style=vertex](35) at (35,39) {};
\draw node[style=vertex](36) at (36,41) {};
\draw node[style=vertex](37) at (37,38) {};
\draw node[style=vertex](38) at (38,36) {};
\draw node[style=vertex](39) at (39,37) {};
\draw node[style=vertex](40) at (40,40) {};
\draw node[style=vertex](41) at (41,35) {};
\draw (0) -- (1) -- (2) -- (3) -- (4) -- (5) -- (6) -- (7) -- (8) -- (9) -- (10) -- (11) -- (12) -- (13) -- (14) -- (15) -- (16) -- (17) -- (18) -- (19) -- (20) -- (21) -- (22) -- (23) -- (24) -- (25) -- (26) -- (27) -- (28) -- (29) -- (30) -- (31) -- (32) -- (33) -- (34) -- (35) -- (36) -- (37) -- (38) -- (39) -- (40) -- (41) ;
\end{tikzpicture}
\caption{}
\end{subfigure}

\begin{subfigure}{\textwidth}
\begin{tikzpicture}[xscale=\textwidth/44cm, yscale=4cm/44cm]
\draw node[style=vertex](0) at (0,8) {};
\draw node[style=vertex](1) at (1,2) {};
\draw node[style=vertex](2) at (2,0) {};
\draw node[style=vertex](3) at (3,20) {};
\draw node[style=vertex](4) at (4,29) {};
\draw node[style=vertex](5) at (5,19) {};
\draw node[style=vertex](6) at (6,3) {};
\draw node[style=vertex](7) at (7,4) {};
\draw node[style=vertex](8) at (8,7) {};
\draw node[style=vertex](9) at (9,5) {};
\draw node[style=vertex](10) at (10,1) {};
\draw node[style=vertex](11) at (11,30) {};
\draw node[style=vertex](12) at (12,31) {};
\draw node[style=vertex](13) at (13,21) {};
\draw node[style=vertex](14) at (14,6) {};
\draw node[style=vertex](15) at (15,9) {};
\draw node[style=vertex](16) at (16,22) {};
\draw node[style=vertex](17) at (17,12) {};
\draw node[style=vertex](18) at (18,10) {};
\draw node[style=vertex](19) at (19,24) {};
\draw node[style=vertex](20) at (20,26) {};
\draw node[style=vertex](21) at (21,23) {};
\draw node[style=vertex](22) at (22,13) {};
\draw node[style=vertex](23) at (23,14) {};
\draw node[style=vertex](24) at (24,17) {};
\draw node[style=vertex](25) at (25,15) {};
\draw node[style=vertex](26) at (26,11) {};
\draw node[style=vertex](27) at (27,27) {};
\draw node[style=vertex](28) at (28,28) {};
\draw node[style=vertex](29) at (29,25) {};
\draw node[style=vertex](30) at (30,16) {};
\draw node[style=vertex](31) at (31,18) {};
\draw node[style=vertex](32) at (32,34) {};
\draw node[style=vertex](33) at (33,33) {};
\draw node[style=vertex](34) at (34,32) {};
\draw node[style=vertex](35) at (35,38) {};
\draw node[style=vertex](36) at (36,42) {};
\draw node[style=vertex](37) at (37,37) {};
\draw node[style=vertex](38) at (38,35) {};
\draw node[style=vertex](39) at (39,36) {};
\draw node[style=vertex](40) at (40,41) {};
\draw node[style=vertex](41) at (41,40) {};
\draw node[style=vertex](42) at (42,39) {};
\draw node[style=vertex](43) at (43,43) {};
\draw (0) -- (1) -- (2) -- (3) -- (4) -- (5) -- (6) -- (7) -- (8) -- (9) -- (10) -- (11) -- (12) -- (13) -- (14) -- (15) -- (16) -- (17) -- (18) -- (19) -- (20) -- (21) -- (22) -- (23) -- (24) -- (25) -- (26) -- (27) -- (28) -- (29) -- (30) -- (31) -- (32) -- (33) -- (34) -- (35) -- (36) -- (37) -- (38) -- (39) -- (40) -- (41) -- (42) -- (43) ;
\end{tikzpicture}
\caption{}
\end{subfigure}

\begin{subfigure}{\textwidth}
\begin{tikzpicture}[xscale=\textwidth/46cm, yscale=4cm/46cm]
\draw node[style=vertex](0) at (0,8) {};
\draw node[style=vertex](1) at (1,2) {};
\draw node[style=vertex](2) at (2,0) {};
\draw node[style=vertex](3) at (3,25) {};
\draw node[style=vertex](4) at (4,34) {};
\draw node[style=vertex](5) at (5,24) {};
\draw node[style=vertex](6) at (6,3) {};
\draw node[style=vertex](7) at (7,4) {};
\draw node[style=vertex](8) at (8,7) {};
\draw node[style=vertex](9) at (9,5) {};
\draw node[style=vertex](10) at (10,1) {};
\draw node[style=vertex](11) at (11,35) {};
\draw node[style=vertex](12) at (12,36) {};
\draw node[style=vertex](13) at (13,26) {};
\draw node[style=vertex](14) at (14,6) {};
\draw node[style=vertex](15) at (15,9) {};
\draw node[style=vertex](16) at (16,27) {};
\draw node[style=vertex](17) at (17,12) {};
\draw node[style=vertex](18) at (18,10) {};
\draw node[style=vertex](19) at (19,29) {};
\draw node[style=vertex](20) at (20,31) {};
\draw node[style=vertex](21) at (21,28) {};
\draw node[style=vertex](22) at (22,13) {};
\draw node[style=vertex](23) at (23,14) {};
\draw node[style=vertex](24) at (24,17) {};
\draw node[style=vertex](25) at (25,15) {};
\draw node[style=vertex](26) at (26,11) {};
\draw node[style=vertex](27) at (27,32) {};
\draw node[style=vertex](28) at (28,33) {};
\draw node[style=vertex](29) at (29,30) {};
\draw node[style=vertex](30) at (30,16) {};
\draw node[style=vertex](31) at (31,23) {};
\draw node[style=vertex](32) at (32,39) {};
\draw node[style=vertex](33) at (33,38) {};
\draw node[style=vertex](34) at (34,37) {};
\draw node[style=vertex](35) at (35,40) {};
\draw node[style=vertex](36) at (36,41) {};
\draw node[style=vertex](37) at (37,22) {};
\draw node[style=vertex](38) at (38,19) {};
\draw node[style=vertex](39) at (39,20) {};
\draw node[style=vertex](40) at (40,43) {};
\draw node[style=vertex](41) at (41,42) {};
\draw node[style=vertex](42) at (42,21) {};
\draw node[style=vertex](43) at (43,44) {};
\draw node[style=vertex](44) at (44,45) {};
\draw node[style=vertex](45) at (45,18) {};
\draw (0) -- (1) -- (2) -- (3) -- (4) -- (5) -- (6) -- (7) -- (8) -- (9) -- (10) -- (11) -- (12) -- (13) -- (14) -- (15) -- (16) -- (17) -- (18) -- (19) -- (20) -- (21) -- (22) -- (23) -- (24) -- (25) -- (26) -- (27) -- (28) -- (29) -- (30) -- (31) -- (32) -- (33) -- (34) -- (35) -- (36) -- (37) -- (38) -- (39) -- (40) -- (41) -- (42) -- (43) -- (44) -- (45) ;
\end{tikzpicture}
\caption{}
\end{subfigure}
\caption{Bicrucial permutations of lengths 34, 36, 40, 42, 44 and 46 respectively.}
\label{fig:small-cases}
\end{figure}
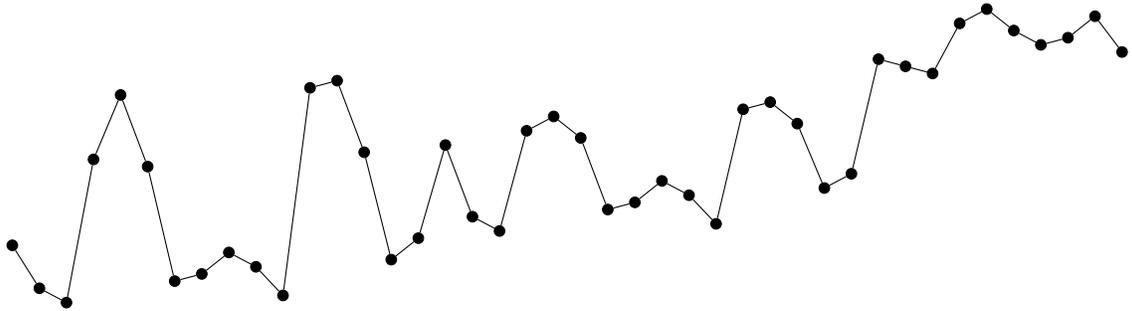
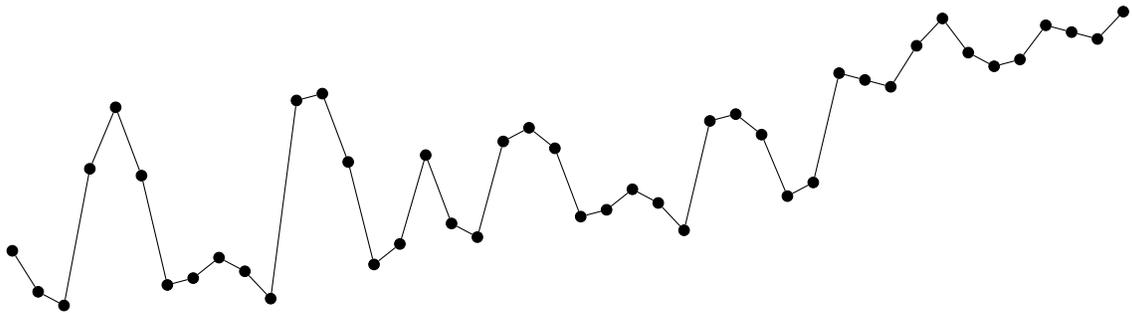
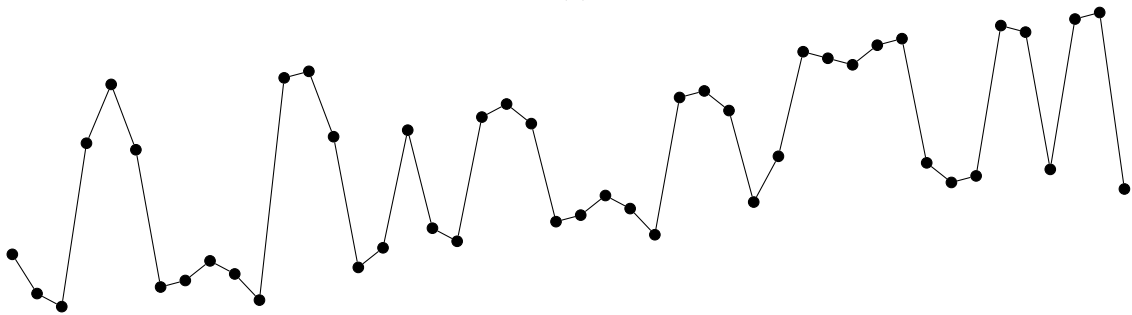

\end{document}